\documentclass[11pt,a4paper]{amsart}
\usepackage{amsthm,amsfonts,amsmath, amssymb,graphicx,float,enumerate,iwona}

\usepackage[numbers,sort&compress]{natbib}
\usepackage[lmargin=33mm,rmargin=33mm,tmargin=33mm,bmargin=32mm]{geometry}
\usepackage[colorlinks=true,citecolor=black,linkcolor=black,urlcolor=blue]{hyperref}

\renewcommand{\baselinestretch}{1.25}
\setlength{\footnotesep}{\baselinestretch\footnotesep}
	
\setlength{\parindent}{0cm}
\setlength{\parskip}{2ex}

\newcommand{\journalarxiv}[2]{#2}

\DeclareMathOperator{\dist}{dist}
\DeclareMathOperator{\diam}{diam}

\newcommand{\msn}[1]{MR:\,\href{http://www.ams.org/mathscinet-getitem?mr=MR#1}{#1}}
\newcommand{\MSN}[2]{MR:\,\href{http://www.ams.org/mathscinet-getitem?mr=MR#1}{#1}}
\newcommand{\doi}[1]{doi:\,\href{http://dx.doi.org/#1}{#1}}

\newcommand{\Zbl}[1]{Zbl:\,\href{http://www.zentralblatt-math.org/zmath/en/search/?q=an:#1}{#1}}

\usepackage{verbatim}


\theoremstyle{plain}
\newtheorem{theorem}{Theorem}
\newtheorem{lemma}[theorem]{Lemma}
\newtheorem{corollary}[theorem]{Corollary}
\newtheorem{proposition}[theorem]{Proposition}

\theoremstyle{definition}

\newcommand{\thmlabel}[1]{\label{thm:#1}}
\newcommand{\thmref}[1]{Theorem~\ref{thm:#1}}
\newcommand{\lemlabel}[1]{\label{lem:#1}}
\newcommand{\lemref}[1]{Lemma~\ref{lem:#1}}
\newcommand{\twolemref}[2]{Lemmas~\ref{lem:#1} and \ref{lem:#2}}

\newcommand{\figlabel}[1]{\label{fig:#1}}
\newcommand{\figref}[1]{Figure~\ref{fig:#1}}

\newcommand{\seclabel}[1]{\label{sec:#1}}
\newcommand{\secref}[1]{Section~\ref{sec:#1}}
\newcommand{\corlabel}[1]{\label{cor:#1}}
\newcommand{\corref}[1]{Corollary~\ref{cor:#1}}
\newcommand{\proplabel}[1]{\label{prop:#1}}
\newcommand{\propref}[1]{Proposition~\ref{prop:#1}}
\newcommand{\twopropref}[2]{Propositions~\ref{prop:#1} and \ref{prop:#2}}

\newcommand{\GG}{\ensuremath{\mathcal{G}}}

\newcommand{\D}{\Delta}

\newcommand{\BB}{\overrightarrow{B}}

\newcommand{\ceil}[1]{\ensuremath{\protect\lceil{#1}\rceil}}
\newcommand{\floor}[1]{\ensuremath{\protect\lfloor{#1}\rfloor}}

\newcommand{\quarter}{\ensuremath{\tfrac14}}
\newcommand{\half}{\ensuremath{\tfrac12}}

\renewcommand{\geq}{\geqslant}
\renewcommand{\leq}{\leqslant}

\begin{document}

\title[The degree-diameter problem for sparse graph classes]{The degree-diameter problem\\ for sparse graph classes}

\author{Guillermo Pineda-Villavicencio}\thanks{Centre for Informatics and Applied Optimisation, Federation University Australia, Ballarat, Australia (\texttt{work@guillermo.com.au}).  Research supported by a postdoctoral fellowship funded by the Skirball Foundation via the Center for Advanced Studies in Mathematics at Ben-Gurion University of the Negev, and by an ISF grant. Corresponding author.}

\author{David R. Wood}\thanks{School of Mathematical Sciences, Monash  University, Melbourne, Australia
    (\texttt{david.wood@monash.edu}). Research supported by the Australian Research Council. }

\subjclass[2000]{}

\date{\today}

\begin{abstract} 
The degree-diameter problem asks for the maximum number of vertices in a graph with maximum degree $\Delta$ and diameter $k$. For fixed $k$, the answer is $\Theta(\Delta^k)$. We consider the degree-diameter problem for particular classes of sparse graphs, and establish the following results. For graphs of bounded average degree the answer is $\Theta(\Delta^{k-1})$, and for graphs of bounded arboricity the answer is $\Theta(\Delta^{\floor{k/2}})$, in both cases  for fixed $k$. For graphs of given treewidth, we determine the the maximum number of vertices up to a constant factor. More precise bounds are given for graphs of given treewidth, graphs embeddable on a given surface, and apex-minor-free graphs. 
\end{abstract}

\maketitle

\section{Introduction} 
\seclabel{Intro}

Let $N(\Delta,k)$ be the maximum number of vertices in a graph with maximum degree at most $\Delta$ and diameter at most $k$. Determining $N(\Delta,k)$ is called the \emph{degree-diameter} problem and is widely studied, especially motivated by questions in network design; see \citep{MS-EJC05} for a survey. Obviously, $N(\Delta,k)$ is at most the number of vertices at distance at most $k$ from a fixed vertex. For $\D\geq3$ (which we implicitly assume),  it follows that
$$N(\Delta,k)\;\leq\; M(\D,k)\;:=\;1+\Delta\sum_{i=1}^{k-1}(\Delta-1)^i\;=\;\frac{\D(\D-1)^k-2}{\D-2} 
\enspace.$$
This  inequality is called the  Moore bound. 
The best lower bound is
$$N(\Delta,k)\geq f(k)\, \Delta^k\enspace,$$
for some function $f$. For example, the de Bruijn graph shows that
$N(\Delta,k)\geq\big(\tfrac{\Delta}{2}\big)^k$; see \lemref{deBruijn}. 
\citet{CG05} established the best known asymptotic bound of 
$N(\Delta,k)\geq\big(\tfrac{\Delta}{1.59}\big)^k$ for sufficiently large $\Delta$.

For a class of graphs \GG, let $N(\Delta,k,\GG)$ be the maximum number
of vertices in a graph in \GG\ with maximum degree at most $\Delta$ and
diameter at most $k$.  We consider  $N(\Delta,k,\GG)$ for some
particular classes \GG\ of sparse graphs, focusing on the case of  small diameter $k$, and large maximum degree $\Delta$.  We prove lower and upper bounds on $N(\Delta,k,\GG)$ of the form 
\begin{equation}
\label{MainEquation}
f(k)\, \Delta^{ g(k) }\end{equation} for some functions $f$ and $g$. Since $k$ is assumed to be small compared to $\Delta$, the most important term in such a bound is $g(k)$. Thus our focus is on $g(k)$ with $f(k)$ a secondary concern. 

We first state two straightforward examples, namely bipartite graphs and trees. The maximum number of vertices in a bipartite graph with maximum degree $\Delta$ and diameter $k$ is  $f(k)\,\Delta^{k-1}$ for some function $f$; see references \cite[Section 2.4.4]{MS-EJC05} and \citep{Biggs74,Delorme85}.
And for trees, it is easily seen that the maximum number of vertices is within a constant factor of $(\Delta-1)^{\floor{k/2}}$, which is a big improvement over the unrestricted bound of $\Delta^k$. Some of the results in this paper can be thought of as generalisations of this observation. 

In what follows we initially consider broadly defined classes of sparse graphs, moving progressively towards more specific classes. The following table summarises our current knowledge, where results in this paper are in bold. 

\medskip
\begin{center}
\begin{tabular}{llr}
\hline
graph class & diameter $k$ & max.\ number of vertices\\\hline
general & & $f(k)\,\Delta^k$ \\
{\bf 3-colourable} & $k\geq2$ & $f(k)\,\Delta^k$\\
\bf triangle-free 3-colourable & $k\geq4$ & $f(k)\,\Delta^k$\\
bipartite & & $f(k)\,\Delta^{k-1}$\\
\bf average degree $d$ & & $f(k)\,d\Delta^{k-1}$\\
\bf arboricity $b$ &&  $f(k,b)\,\Delta^{\floor{k/2}}$\\
\bf treewidth $t$ & odd $k$ & $ct\,(\Delta-1)^{(k-1)/2}$\\
\bf treewidth $t$ & even $k$ & $c\sqrt{t}\,(\Delta-1)^{k/2}$\\
\bf Euler genus $g$ & odd $k$ & $\leq c(g+1)k\,(\Delta-1)^{(k-1)/2}$\\
\bf Euler genus $g$ & even $k$ & $\leq c\sqrt{(g+1)k}\,(\Delta-1)^{k/2}$\\
trees & & $c\,\Delta^{\floor{k/2}}$\\
\hline
\end{tabular}
\end{center}
\medskip

First consider the class of graphs with average degree $d$. In this case, we prove that the maximum number of vertices is $f(k)\,d\Delta^{k-1}$ for some function $f$ (see \secref{Degree}). This shows that by assuming bounded average degree we obtain a modest improvement over the standard bound of $(\Delta-1)^k$. A much more substantial improvement is obtained by considering arboricity.

The \emph{arboricity} of a graph $G$ is the minimum number of spanning
forests whose union is $G$.
\citet{NW-JLMS64} proved that the arboricity of $G$ equals
\begin{equation}
  \label{NashWilliams}
  \max_{H\subseteq G}\,\big\lceil\tfrac{|E(H)|}{ |V(H)|-1}\big\rceil\enspace,
\end{equation}
where the maximum is taken over all subgraphs $H$ of $G$.
For example, it follows from Euler's formula that every planar graph
has arboricity at most $3$, and every graph with
Euler genus $g$ has arboricity at most $O(\sqrt{g})$. 
More generally, every graph that excludes a fixed minor has bounded arboricity. 
Note that $\delta\leq d\leq 2b$ for every graph with minimum degree $\delta$,
average degree $d$, and arboricity $b$. Arboricity is a more refined
measure than average degree, in the sense that a graph has bounded
arboricity if and only if every subgraph has bounded average degree.

We prove that for a graph with arboricity $b$ the maximum number of vertices is $f(b,k)\,\Delta^{\floor{k/2}}$ for some function $f$ (see \secref{Arboricity}). Thus by moving from bounded average degree to bounded arboricity the $g(k)$ term discussed above is reduced from $k-1$ to $\floor{\frac{k}{2}}$. This result generalises the above-mentioned bound for trees, which have arboricity 1. The dependence on $b$ in $f$ can be reduced by making more restrictive assumptions about the graph. 

For example, treewidth is a parameter that measures how tree-like a given graph is. The \emph{treewidth} of a graph $G$ can be defined to be the minimum integer $t$ such that $G$ is a spanning subgraph of a chordal\footnote{A graph is \emph{chordal} if every induced cycle is a triangle.} graph with no $(t+2)$-clique. For example, trees are exactly the connected graphs with treewidth 1. See \citep{Reed97,Bodlaender-TCS98} for background on treewidth. Since the arboricity of a graph is at most its treewidth, bounded treewidth is indeed a more restrictive assumption than bounded arboricity. We prove that the maximum number of vertices in a graph with treewidth $t$ is within a constant factor of $t(\Delta-1)^{(k-1)/2}$ if $k$ is odd, and of $\sqrt{t}(\Delta-1)^{k/2}$  if $k$ is even (and $\Delta$ is large). These results immediately imply the best known bounds for graphs of given Euler genus\footnote{A \emph{surface} is a non-null compact connected 2-manifold without boundary. Every surface is homeomorphic to the sphere with $h$ handles or the sphere with $c$ cross-caps.  The sphere with $h$ handles has \emph{Euler genus} $2h$, and the sphere with $c$ cross-caps has \emph{Euler genus} $c$. The \emph{Euler genus} of a graph $G$ is the minimum Euler genus of a surface in which $G$ embeds. See the monograph by \citet{MoharThom} for background on graphs embedded in surfaces.}, and new bounds for apex-minor-free graphs. All these results are presented in \secref{Separators}.


Our results in \secref{3Colour} are of a different nature. There, we describe (non-sparse) graph classes for which the maximum number of vertices is not much different from the unrestricted case. In particular, we prove that for $k\geq2$, there are 3-colourable graphs with $f(k)\,\Delta^k$ vertices, and for 
for $k\geq4$, there are triangle-free 3-colourable graphs with $f(k)\,\Delta^k$ vertices. These results are in contrast to the bipartite case, in which $f(k)\,\Delta^{k-1}$ is the answer. 

All undefined terminology and notation is in reference \cite{D05}.


\section{Basic Constructions}

This section gives some graph constructions that will later be used for proving lower bounds on $N(\Delta,k,\GG)$.

\begin{lemma}
  \lemlabel{deBruijn}
  For all integers $r\geq 1$ and $k\geq 1$ the de Bruijn graph $B(r,k)$
has $r^k$ vertices, maximum degree at most $2r$, and diameter
  $k$. Moreover, for $k\geq 2$, there are sets $B_1,\dots,B_{r^{k-1}}$
  of vertices in $B(r,k)$, each containing $2r-2$ or $2r$ vertices,
  such that each vertex of $B(r,k)$ is in exactly two of the $B_i$,
  and the endpoints of each edge of $B(r,k)$ are in some $B_i$.
\end{lemma}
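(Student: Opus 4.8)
The plan is to work directly with the standard definition of the de Bruijn graph $B(r,k)$: its vertex set is $[r]^k$, the set of all words of length $k$ over the alphabet $[r]=\{0,1,\dots,r-1\}$, and there is an arc from $a_1a_2\cdots a_k$ to $a_2a_3\cdots a_k b$ for every $b\in[r]$. Viewing this as an undirected (multi)graph, each vertex has out-degree $r$ and in-degree $r$, so the maximum degree is at most $2r$ (it is less when a word is its own shift, e.g.\ a constant word, which contributes loops we may discard, or when the in- and out-neighbourhoods overlap). The vertex count $r^k$ is immediate. For the diameter bound, given words $u=u_1\cdots u_k$ and $w=w_1\cdots w_k$, the walk that successively shifts in $w_1,w_2,\dots,w_k$ transforms $u$ into $w$ in exactly $k$ steps, following arcs of $B(r,k)$ the whole way; hence $\dist(u,w)\le k$, and taking $u,w$ that share no suffix/prefix overlap shows the diameter is exactly $k$ (for $k\ge1$ and $r\ge1$, with the usual small-case checks).

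The second part is the substantive one. For each word $s=s_1\cdots s_{k-1}\in[r]^{k-1}$, define $B_s$ to be the set of all vertices of $B(r,k)$ obtained by prepending or appending one symbol to $s$, i.e.
\[
B_s \;=\; \{\, b s_1 s_2\cdots s_{k-1} : b\in[r]\,\}\;\cup\;\{\, s_1 s_2\cdots s_{k-1} b : b\in[r]\,\}.
\]
Each of the two listed sets has exactly $r$ elements, and they overlap precisely in the word $s_1\cdots s_{k-1} s_{k-1}=s_1\cdots s_{k-1}s_1$? — more carefully, $bs_1\cdots s_{k-1}=s_1\cdots s_{k-1}b'$ forces $s$ to be a constant word and then the two sets coincide as the single word — so in general $|B_s|$ is $2r$, dropping to $2r-2$ exactly when $s$ is constant (the two $r$-sets then coincide, but we need the bound $2r-2$, which suggests the intended convention is instead to take $B_s$ to consist of the $r$ words of the form $cs_1\cdots s_{k-1}$ together with the $r$ words $s_1\cdots s_{k-1}c$ but then to remove the "self-referential'' overlaps so that the count is $2r$ generically and $2r-2$ in the degenerate case). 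I would pin down the exact convention by first checking the two required properties, since those force the right definition.

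The incidence properties are then verified as follows. \emph{Each vertex lies in exactly two $B_s$:} a vertex $v=v_1\cdots v_k$ lies in $B_s$ as a "prepend'' word iff $s=v_2\cdots v_k$, and as an "append'' word iff $s=v_1\cdots v_{k-1}$; these are two distinct words of length $k-1$ (distinct unless $v$ is constant, which is the degenerate case again), giving exactly two blocks. \emph{Every edge has both endpoints in a common $B_s$:} an edge of $B(r,k)$ joins $a_1\cdots a_k$ to $a_2\cdots a_k b$; both of these words lie in $B_{a_2\cdots a_k}$ — the first as an "append'' word ($a_2\cdots a_k$ followed by... wait, $a_1\cdots a_k$ is $a_1$ prepended to $a_2\cdots a_k$, so it is a "prepend'' word of $B_{a_2\cdots a_k}$) and the second as an "append'' word. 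Thus the $B_s$, indexed by the $r^{k-1}$ words of length $k-1$, form the desired family. The main obstacle is purely bookkeeping: getting the degenerate constant-word case to reconcile the "$2r-2$ or $2r$'' wording with the clean definition above, and confirming that loops/multi-edges arising when $k$ is small or $r=1$ do not break the degree or diameter claims; I expect this to be a short case analysis rather than a real difficulty, and it does not affect the later applications, which only use $\Delta=2r$ and the block structure up to constant factors.
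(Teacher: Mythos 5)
Your proposal is the paper's proof made explicit. The paper constructs $\BB(r,k)$ by iterating the line-digraph operation starting from the complete digraph with loops on $r$ vertices, and for each vertex $v$ of $\BB(r,k-1)$ sets $B_v$ equal to the vertices of $B(r,k)$ that correspond to the \emph{non-loop} arcs of $\BB(r,k-1)$ incident with $v$. If you identify the vertices of $\BB(r,k-1)$ with words of length $k-1$, this is precisely your $B_s$, with the one degenerate element you flagged removed: for constant $s=c^{k-1}$, the word $c^k$ (which is the loop at $s$) is discarded. That convention settles your ``$2r-2$ versus $2r-1$'' worry, since with it $|B_s|=2r-2$ for constant $s$ and $|B_s|=2r$ otherwise. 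All the other pieces — the vertex count, the degree bound $2r$, the $k$-step shift walk for the diameter, a vertex $v$ lying in $B_{v_2\cdots v_k}$ and $B_{v_1\cdots v_{k-1}}$, and both endpoints of an arc $a_1\cdots a_k \to a_2\cdots a_k b$ lying in $B_{a_2\cdots a_k}$ — are exactly the paper's argument in explicit coordinates.

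Your second worry is also real and you should not hand-wave it away: under your definition a constant word $c^k$ lies in only one $B_s$, and under the paper's loop-discarding convention it lies in none, so the ``exactly two'' clause of the lemma fails for the $r$ constant words in either version — this is a minor imprecision present in the paper's own proof. It is harmless because in \thmref{ArboricityLowerBound} (the only place the block structure is used) one only needs ``each vertex is in \emph{at most} two $B_i$'' to bound the degree and ``every edge is covered by some $B_i$'' for the diameter argument, and both still hold. If you want the lemma literally true as stated you should either note the exception for constant words or artificially assign $c^k$ to a second block; neither change affects anything downstream. So: same route as the paper, correct in substance, but you should commit to the loop-discarding definition of $B_s$ rather than leaving it as ``bookkeeping to pin down later,'' and you should explicitly record (or repair) the constant-word exception rather than hoping it reconciles.
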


\begin{proof}
  In what follows, a \emph{digraph} is a directed graph possibly with
  loops and possibly with arcs in opposite directions between two
  vertices. A digraph is \emph{$r$-inout-regular} if each vertex has
  indegree $r$ and outdegree $r$ (where a loop at $v$ counts in the
  indegree and the outdegree of $v$). A digraph has \emph{strong
    diameter} $k$ if for all (not necessarily distinct) vertices $v$
  and $w$ there is a directed walk from $v$ to $w$ of length exactly
  $k$.

  Let $\BB(r,k)$ be the de Bruijn digraph \citep{deBruijn,Good46},
  which has $r^k$ vertices, is $r$-inout-regular, and has diameter
  $k$.  Fiol et al.~\cite[Sec.~IV]{FioYebAle84}, and \citet{ZL87} showed that the $\BB(r,k)$ can be constructed
  recursively as a line digraph, as we now explain. If $G$ is a
  digraph with arc set $A(G)$, then the \emph{line digraph} $L(G)$ has
  vertex set $A(G)$, where $(uv,vw)$ is an arc of $L(G)$ for all distinct arcs
  $uv,vw\in A(G)$.
  
   Let $\BB(r,1)$ be the $r$-vertex digraph in which every arc is
  present (including loops). Now recursively define
  $\BB(r,k):=L(\BB(r,k-1))$. The digraph $\BB(r,k)$ is $r$-inout-regular and has strong diameter $k$; see \cite[Sec.~IV]{FioYebAle84}.

  Define $B(r,k)$ to be the undirected graph that underlies
  $\BB(r,k)$ (ignoring loops, and replacing bidirectional arcs by a
  single edge). Then $B(r,k)$ has $r^k$ vertices, has maximum degree
  at most $2r$, and has (undirected) diameter $k$ (since loops can be
  ignored in shortest paths).

  It remains to prove the final claim of the lemma, where $k\geq 2$.
  For each vertex $v$ of $\BB(r,k-1)$, let $B_v$ be the set of
  vertices of $B(r,k)$ that correspond to non-loop arcs incident with
  $v$ in $\BB(r,k-1)$. Thus $|B_v|$ equals $2r-2$ or $2r$ depending on
  whether there is a loop at $v$ in $\BB(r,k-1)$.  Each vertex of
  $B(r,k)$ corresponding to an arc $vw$ of $\BB(r,k-1)$ is in exactly
  two of these sets, namely $B_v$ and $B_w$.  The endpoints of each
  edge of $B(r,k)$ corresponding to a path $uv,vw$ of $\BB(r,k-1)$ are
  both in $B_v$. These $r^{k-1}$ sets, one for each vertex of
  $\BB(r,k-1)$, define the desired sets in $B(r,k)$.
\end{proof}

The next two lemmas will be useful later. 

\begin{lemma}
  \lemlabel{LineGraph}
  For every integer $q\geq 1$ there is a $(2q-2)$-regular graph $L$
  with $\binom{q+1}{2}$ vertices, containing cliques
  $L_1,\dots,L_{q+1}$ each of order $q$, such that each vertex in $L$
  is in exactly two of the $L_i$, and $L_i\cap L_j\neq\emptyset$ for
  all $i,j\in[1,q+1]$.
\end{lemma}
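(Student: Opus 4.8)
The plan is to let $L$ be the \emph{line graph} of the complete graph $K_{q+1}$: the vertices of $L$ are the edges of $K_{q+1}$, and two such vertices are adjacent in $L$ precisely when the corresponding edges of $K_{q+1}$ share an endpoint. Since $K_{q+1}$ has $\binom{q+1}{2}$ edges, $L$ has $\binom{q+1}{2}$ vertices, as required.

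First I would check that $L$ is $(2q-2)$-regular. A fixed edge $uv$ of $K_{q+1}$ meets exactly the $q-1$ edges incident with $u$ (other than $uv$) and the $q-1$ edges incident with $v$ (other than $uv$); these two families are disjoint, so the vertex $uv$ of $L$ has degree $(q-1)+(q-1)=2q-2$. Next, for each vertex $i$ of $K_{q+1}$, let $L_i$ be the set of the $q$ edges of $K_{q+1}$ incident with $i$. Any two of these edges share the endpoint $i$, so $L_i$ is a clique of $L$ of order exactly $q$. Each edge $uv$ of $K_{q+1}$ has exactly two endpoints, so as a vertex of $L$ it lies in exactly two of the sets $L_i$, namely $L_u$ and $L_v$. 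Finally, for distinct $i,j\in[1,q+1]$ the edge $ij$ of $K_{q+1}$ lies in both $L_i$ and $L_j$, so $L_i\cap L_j\neq\emptyset$; and trivially $L_i\cap L_i=L_i\neq\emptyset$. This establishes all the stated properties.

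There is no real obstacle here: the argument is just the standard dictionary between $K_{q+1}$ and its line graph, and the only point worth a second glance is the degenerate case $q=1$, where $L$ is a single vertex that lies in both of the two order-$1$ ``cliques'' $L_1,L_2$, consistently with $\binom{2}{2}=1$. If a self-contained formulation is preferred, one can instead take $V(L)$ to be the set of $2$-element subsets of $[1,q+1]$, declare $\{a,b\}$ adjacent to $\{c,d\}$ exactly when $|\{a,b\}\cap\{c,d\}|=1$, and set $L_i=\{\,e\in V(L):i\in e\,\}$; the verification above then becomes purely combinatorial.
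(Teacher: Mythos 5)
Your proof is correct and uses exactly the same construction as the paper, namely taking $L$ to be the line graph of $K_{q+1}$ with $L_i$ the star of edges at vertex $i$; the paper simply states that the claimed properties are immediate, whereas you spell out the routine verifications. No further comment is needed.
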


\begin{proof}
  Let $L$ be the line graph of the complete graph $K_{q+1}$. That is,
  $V(L):=\{\{i,j\}:1\leq i,j\leq q+1,\, i\neq j\}$, where
  $L_i:=\{\{i,j\}:1\leq j\leq q+1,\, i\neq j\}$ is a clique for each
  $i\in[1,q+1]$. The claimed properties are immediate.
\end{proof}

\begin{lemma}
  \lemlabel{BipartiteLemma}
  For all integers $p\geq1$ and $q\geq 1$ and $m\leq (q+1)p$ there is
  a bipartite graph $T$ with bipartition $C,D$, such that $C$ consists
  of $m$ vertices each with degree $q$, and $D$ consists of
  $\binom{q+1}{2}$ vertices each with degree at most $2p$, and every
  pair of vertices in $C$ have a common neighbour in $D$.
\end{lemma}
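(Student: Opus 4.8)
The plan is to build $T$ directly on top of the graph $L$ from \lemref{LineGraph}, taking $D:=V(L)$, so that $|D|=\binom{q+1}{2}$ as required. First I would partition $C$ into $q+1$ (possibly empty) parts $C_1,\dots,C_{q+1}$, each of size at most $p$; this is possible precisely because $m\leq (q+1)p$, since we can make each part have at most $\ceil{m/(q+1)}\leq p$ vertices. Then, for each $i\in[1,q+1]$, I would join every vertex of $C_i$ to every vertex of the $q$-clique $L_i\subseteq V(L)=D$. The cliques $L_i$ are used here only as $q$-element subsets of $D$; the edges within $L_i$ are irrelevant, since $T$ is to be bipartite with parts $C$ and $D$.

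With this construction the verification is short. Each vertex of $C$ lies in exactly one part $C_i$, hence is adjacent to exactly the $q$ vertices of $L_i$ and so has degree $q$. For the degree bound on $D$: by \lemref{LineGraph} each vertex $v\in D$ lies in exactly two of the cliques $L_i$, say $L_i$ and $L_j$, so the neighbourhood of $v$ in $T$ is contained in $C_i\cup C_j$, which has at most $2p$ vertices. For the common-neighbour condition, take $u,w\in C$ with $u\in C_i$ and $w\in C_j$. If $i=j$, then (as $q\geq1$) the clique $L_i$ is nonempty and any vertex of $L_i$ is a common neighbour. If $i\neq j$, then \lemref{LineGraph} supplies a vertex $v\in L_i\cap L_j$, and since $u$ is joined to all of $L_i$ and $w$ to all of $L_j$, this $v\in D$ is a common neighbour of $u$ and $w$.

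I do not expect a genuine obstacle: the combinatorial heart of the statement — a family of $q$-sets in a ground set of size $\binom{q+1}{2}$, pairwise intersecting, with each element in exactly two sets — is already packaged in \lemref{LineGraph}. The only things to get right are the bookkeeping, namely that $m\leq (q+1)p$ is exactly what is needed to split $C$ into $q+1$ blocks of size at most $p$, that "in exactly two of the $L_i$" yields the bound $2p$ on degrees in $D$, and that the pairwise-intersection property handles cross-block pairs in $C$ while a single (nonempty) $L_i$ handles pairs within one block.
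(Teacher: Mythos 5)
Your construction is essentially the same as the paper's: both build $T$ on top of \lemref{LineGraph} by taking $D$ to be its vertex set, attaching a block of at most $p$ new vertices to each clique $L_i$, and using "each vertex in exactly two $L_i$" for the degree bound on $D$ and "pairwise intersecting $L_i$" for the common-neighbour property. The only cosmetic difference is that the paper first builds the graph with $|C|=(q+1)p$ and then deletes $(q+1)p-m$ vertices from $C$, whereas you partition the $m$ vertices of $C$ into blocks of size at most $p$ directly; these are equivalent.
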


\begin{proof}
  By \lemref{LineGraph}, there is a set $D$ of size
  $\binom{q+1}{2}$, containing subsets $D_1,\dots,D_{q+1}$ each of
  size $q$, such that each element of $D$ is in exactly two of the
  $D_i$, and $D_i\cap D_j\neq\emptyset$ for all $i,j\in[1,q+1]$.

  Let $T$ be the graph with vertex set $C\cup D$, where $C$ is defined
  as follows. For each $i\in[1,q+1]$ add a set $C_i$ of $p$ vertices
  to $C$, each adjacent to every vertex in $D_i$. Since $|D_i|=q$,
  each vertex in $C$ has degree $q$. Since each element of $D$ is in
  exactly two of the $D_i$, each vertex in $D$ has degree $2p$.

  Consider two vertices $v,w\in C$. Say $v\in C_i$ and $w\in C_j$. Let
  $x$ be a vertex in $D_i\cap D_j$. Then $x$ is a common neighbour of
  $v$ and $w$ in $G$. 

We have proved that $T$ has the desired properties in the case that
  $m=(q+1)p$. Finally, delete $(q+1)p-m$ vertices from $C$,
  and the obtained graph has the desired properties.
\end{proof}

%
%
  
\section{Average Degree}
\seclabel{Degree}

This section presents bounds on the maximum number of vertices in a graph with given average degree. For fixed diameter, the upper and lower bounds are within a constant factor. We have the following rough upper bound for graphs of given minimum degree.

\begin{proposition}
  \proplabel{MinimumDegreeUpperBound}
  Every graph with minimum degree $\delta$, maximum degree $\Delta$ and
  diameter $k$ has at most $2\delta(\Delta-1)^{k-1}+1$ vertices.
\end{proposition}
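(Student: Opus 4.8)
The plan is to run a breadth-first search rooted at a vertex of \emph{smallest} degree: this makes the first BFS layer contain only $\delta$ vertices, and from layer two onward the usual Moore-type counting applies, so the leading factor $\Delta$ of the ordinary Moore bound gets replaced by a factor $2\delta$.

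First I would fix a vertex $v$ with $\deg(v)=\delta$, which exists because $\delta$ is the minimum degree. For $i\geq 0$, let $L_i$ denote the set of vertices at distance exactly $i$ from $v$. Since the diameter is at most $k$, the sets $L_0,L_1,\dots,L_k$ partition $V(G)$.

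Next I would bound $|L_i|$. Clearly $|L_0|=1$ and $|L_1|=\deg(v)=\delta$. For $i\geq 2$, every vertex of $L_i$ has a neighbour in $L_{i-1}$; conversely each $u\in L_{i-1}$ has at least one neighbour in $L_{i-2}$ (the penultimate vertex on a shortest $v$--$u$ path), hence at most $\Delta-1$ neighbours in $L_i$. Therefore $|L_i|\leq(\Delta-1)\,|L_{i-1}|$, and by induction $|L_i|\leq \delta(\Delta-1)^{i-1}$ for every $i\geq 1$. Summing the resulting geometric series gives
\[
  |V(G)|\;\leq\;1+\sum_{i=1}^{k}\delta(\Delta-1)^{i-1}\;=\;1+\delta\,\frac{(\Delta-1)^{k}-1}{\Delta-2}\enspace.
\]

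The only remaining step is arithmetic: since $\Delta\geq 3$ we have $(\Delta-3)(\Delta-1)^{k-1}+1\geq 1>0$, which rearranges to $\frac{(\Delta-1)^{k}-1}{\Delta-2}\leq 2(\Delta-1)^{k-1}$, and the claimed bound $2\delta(\Delta-1)^{k-1}+1$ follows. I do not anticipate any genuine difficulty here; the one idea that matters is rooting the search at a minimum-degree vertex rather than an arbitrary one, and the final inequality in fact holds with room to spare.
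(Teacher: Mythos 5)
Your proof is correct and follows essentially the same approach as the paper: root the layer-count at a minimum-degree vertex so that the first layer has size $\delta$ rather than $\Delta$, apply the Moore-type recursion to the later layers, and finish with the geometric-sum estimate. The only cosmetic difference is that the paper writes $\frac{(\Delta-1)^k-1}{\Delta-2}\leq 2(\Delta-1)^{k-1}$ directly, whereas you spell out the equivalent inequality $(\Delta-3)(\Delta-1)^{k-1}\geq 0$ for $\Delta\geq 3$.
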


\begin{proof}
  Let $v$ be a vertex of degree $\delta$.  For $0\leq i\leq k$, let $n_i$
  be the number of vertices at distance $i$ from $v$. Thus $n_0=1$ and
  $n_i\leq \delta(\Delta-1)^{i-1}$ for all $i\geq 1$.  In total,
  $n=\sum_{i=0}^kn_i\leq 1+\sum_{i=1}^k\delta(\Delta-1)^{i-1}= 1 +
  \delta\,\frac{(\Delta-1)^k-1}{\Delta-2} \leq 1 + 2\delta(\Delta-1)^{k-1}$.
\end{proof}

Since minimum degree is at most average degree, we have the following
corollary.

\begin{corollary}
  \corlabel{AverageDegreeUpperBound}
  Every graph with average degree $d$, maximum degree $\Delta$ and
  diameter $k$ has at most $2d(\Delta-1)^{k-1}+1$ vertices.
\end{corollary}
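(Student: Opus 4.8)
The plan is to derive the corollary directly from \propref{MinimumDegreeUpperBound}, using the single elementary observation that the minimum degree of a graph never exceeds its average degree. Concretely, if $G$ has $n$ vertices, average degree $d$ and minimum degree $\delta$, then $nd=\sum_{v\in V(G)}\deg(v)\geq n\delta$, so $\delta\leq d$.

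Given a graph $G$ with average degree $d$, maximum degree $\Delta$ and diameter $k$, let $\delta$ be its minimum degree. By the observation above, $\delta\leq d$. Applying \propref{MinimumDegreeUpperBound}, $G$ has at most $2\delta(\Delta-1)^{k-1}+1$ vertices; since $\delta\leq d$ and $(\Delta-1)^{k-1}\geq 0$ (recall $\Delta\geq 3$), this is at most $2d(\Delta-1)^{k-1}+1$, as claimed.

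I do not anticipate any obstacle here: essentially all the content lies in \propref{MinimumDegreeUpperBound}, and the passage from minimum degree to average degree is immediate. (If one preferred not to cite the proposition as a black box, one could instead re-run its ball-counting argument from a vertex of minimum degree $\delta$ and substitute $\delta\leq d$ only at the final step, but invoking the proposition is cleaner.)
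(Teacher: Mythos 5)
Your proof is correct and follows exactly the paper's route: the paper derives the corollary from \propref{MinimumDegreeUpperBound} by the one-line remark that minimum degree is at most average degree, which is precisely your argument (with the handshake-lemma justification spelled out). Nothing to change.
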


The following is the main result of this section; it says that \corref{AverageDegreeUpperBound} is within a constant factor of optimal for fixed $k$.




\begin{proposition}
  \proplabel{AverageDegreeLowerBound}
  For all integers $d\geq 4$ and $k\geq 3$ and $\Delta\geq 2d$ there is a
  graph with average degree at most $d$, maximum degree at most
  $\Delta$, diameter at most $k$, and at least
  $\frac{d}{8}\floor{\frac{\Delta}{4}}^{k-1}$ vertices.
\end{proposition}

\begin{proof}
  Let $r:=\floor{\frac{\Delta}{4}}$. Let $q:=\floor{\frac{d}{4}}\geq
  2$.  Let $p:=\floor{\frac{\Delta}{2}}-r-q+1$. Note that $d\geq 4q$ and
  $4p\geq \Delta-4q \geq \frac{\Delta}{2}$.

  Let $B:=B(r,k-2)$ be the graph from \lemref{deBruijn} with
  maximum degree at most $2r$, diameter $k-2$, and $r^{k-2}$ vertices.

  Let $L$ be the $(2q-2)$-regular graph from \lemref{LineGraph}
  with $\binom{q+1}{2}$ vertices, containing cliques
  $L_1,\dots,L_{q+1}$ each of order $q$, such that each vertex in $L$
  is in exactly two of the $L_i$, and $L_i\cap L_j\neq\emptyset$ for
  all $i,j\in[1,q+1]$.

  Let $H$ be the cartesian product graph $L\,\square\, B$.  Note that
  $H$ has $\binom{q+1}{2}\,r^{k-2}$ vertices and has maximum degree at
  most $2q-2+2r$.  For $i\in[1,q+1]$ and $v\in V(B)$, let $X_{i,v}$
  be the clique $\{(x,v):x\in L_i\}$ in $H$.  Since each vertex in $L$
  is in exactly two of the $L_i$, each vertex in $H$ is in exactly two
  of the $X_{i,v}$.

  Let $G$ be the graph obtained from $H$ as follows: for $i\in[1,q+1]$
  and $v\in V(B)$, add an independent set $Y_{i,v}$ of $p$ vertices to
  $G$ completely adjacent to $X_{i,v}$; that is, every vertex in
  $Y_{i,v}$ is adjacent to every vertex in $X_{i,v}$. We now prove
  that $G$ has the claimed properties.

  The number of vertices in $G$ is
$$|V(G)|
\geq \sum_{i,v}|Y_{i,v}| =(q+1)r^{k-2} p \geq
\tfrac{d}{4}\floor{\tfrac{\Delta}{4}}^{k-2} \tfrac{\Delta}{8} \geq
\tfrac{d}{8}\floor{\tfrac{\Delta}{4}}^{k-1} \enspace.$$

To determine the diameter of $G$, let $\alpha$ and $\beta$ be vertices
in $G$. Say $\alpha\in X_{i,v}\cup Y_{i,v}$ and $\beta\in X_{j,w}\cup
Y_{j,w}$. Let $x$ be a vertex in $L_i\cap L_j$. Let
$v=y_1,\dots,y_{\ell}=w$ be a path of length at most $k-2$ in
$B$. Then $\alpha,(x,y_1),(x,y_2),\dots,(x,y_\ell),\beta$ is path of
length at most $k$ in $G$. Hence $G$ has diameter at most $k$.

Consider the maximum degree of $G$. Each vertex in some set $Y_{i,v}$
has degree $|X_{i,v}|=|L_i|=q\leq\Delta$. Each vertex in some set
$X_{i,v}$ has degree $2q-2+2r+2p\leq \Delta$. Thus $G$ has maximum
degree at most $\Delta$.

It remains to prove that the average degree of $G$ is at most $d$.
There are $|V(H)|= \binom{q+1}{2}\,r^{k-2}$ vertices of degree at most
$\Delta$, and there are $(q+1) r^{k-2}p$ vertices of degree $q$. Thus
the average degree is at most
$$
\frac{\binom{q+1}{2}\,r^{k-2}\cdot \Delta\;+\; (q+1)
  r^{k-2}pq}{\binom{q+1}{2}\,r^{k-2} \;+\; (q+1) r^{k-2}p} =
\frac{\frac{q}{2} \Delta\;+\; pq}{\frac{q}{2} \;+\; p}$$ Hence it
suffices to prove that $q\Delta + 2pq \leq (q + 2p)d$.  Since
$\Delta\geq 2d$ and $d\geq 4q$,
 $$d\Delta 
 = \tfrac{d\Delta}{2} + \tfrac{d\Delta}{2} \geq \tfrac{d\Delta}{2} +
 d^2 \geq 2q\Delta + 4dq\enspace.$$ That is, $2d\Delta -
 2q\Delta  - 8qd \geq d\Delta- 4qd$.  Since $4p\geq \Delta-4q$
 and $8q^2\geq 0$,
$$8p(d-q)\geq 2(\Delta-4q)(d-q) 
= 2d\Delta-2q\Delta-8qd+8q^2 \geq d\Delta- 4qd \geq 4q\Delta-
4qd \enspace.$$ That is, $4pd+2qd\geq 2q\Delta + 4pq$, as
desired. Hence the average degree of $G$ is at most $d$.
\end{proof}

Note that for particular values of $k$ and $\Delta$, other graphs can
be used instead of the de Bruijn graph in the proof of
\propref{AverageDegreeLowerBound} to improve the constants in
our results; we omit all these details.

\section{Arboricity}
\seclabel{Arboricity}

This section proves that the maximum number of vertices in a graph
with arboricity $b$ is $f(b,k)\cdot\Delta^{\floor{k/2}}$ for some
function $f$. Reasonably tight lower and upper bounds on $f$ are
established. First we prove the upper bound.

\begin{theorem}
  \thmlabel{ArboricityUpperBound}
  For every graph $G$ with arboricity $b$, diameter $k$, and maximum
  degree $\Delta$,
$$|V(G)| \leq 4k (2b)^k \Delta^{\floor{k/2}}+1\enspace.$$
\end{theorem}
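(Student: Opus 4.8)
The plan is to bound the number of vertices at each distance $i$ from a fixed root $v$, exploiting the fact that bounded arboricity gives us a useful orientation of the edges. First I would fix a vertex $v$ and let $V_i$ denote the set of vertices at distance exactly $i$ from $v$, so that $|V(G)| = \sum_{i=0}^k |V_i|$. The key structural input is that a graph with arboricity $b$ decomposes into $b$ forests; orienting each forest away from its roots, every vertex has out-degree at most $b$ in the resulting orientation of $G$ (equivalently, $G$ is $b$-degenerate up to a factor of $2$, but the forest decomposition is cleaner here). Thus each edge is oriented, and each vertex has at most $b$ out-neighbours while still having total degree at most $\Delta$.

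The heart of the argument is a recursive bound relating $|V_{i+1}|$ to $|V_i|$ and $|V_{i-1}|$. Every vertex in $V_{i+1}$ has a neighbour in $V_i$. Classify each vertex $u \in V_{i+1}$ by whether the edge to its chosen $V_i$-neighbour is oriented out of $u$ or into $u$. Vertices of the first type: each contributes to the out-degree budget of some vertex in $V_i$, and each vertex of $V_i$ absorbs at most $b$ such, so there are at most $b|V_i|$ of them. Vertices of the second type have an edge oriented \emph{from} $V_i$ \emph{to} $V_{i+1}$; but a vertex $w \in V_i$ has only $b$ out-neighbours total, and those out-neighbours lie in $V_{i-1} \cup V_i \cup V_{i+1}$, so the number of out-edges from $V_i$ landing in $V_{i+1}$ is again at most $b|V_i|$. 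Hence $|V_{i+1}| \leq 2b|V_i|$. That alone only recovers the trivial $(2b)^k$-type bound, so I need the extra saving that produces $\Delta^{\floor{k/2}}$: the point is that out-edges from $V_i$ are cheap (at most $b$ per vertex) but in-edges can be expensive (up to $\Delta$ per vertex). Counting more carefully, $|V_{i+1}|$ is bounded by $b|V_i|$ (out-edges from $V_{i+1}$, i.e.\ "going back cheaply") plus the number of edges oriented from $V_i$ to $V_{i+1}$, and this latter quantity is at most $\min\{b|V_i|, \Delta |V_{i+1}|\}$ — not directly helpful — so instead I track edges oriented from $V_{i-1}$ forward: a vertex in $V_{i+1}$ whose back-edge goes to $V_i$ must, if it is not hit by an out-edge from $V_i$, be reached... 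Let me restructure: the saving comes from pairing consecutive layers. Over two steps, $|V_{i+2}| \leq b|V_{i+1}| + (\text{edges from } V_{i+1} \text{ forward})$, and the total number of forward edges out of $V_{i+1}$ is at most $b|V_{i+1}|$, while each such edge lands on a distinct-ish vertex, so $|V_{i+2}| \leq 2b|V_{i+1}|$; but also every vertex in $V_{i+1}$ has all but at most $b$ of its edges going to $V_i$, giving $|V_i| \cdot \Delta \geq (\text{edges between } V_i, V_{i+1}) \geq |V_{i+1}| - b|V_{i+1}|$... The correct accounting, which I would carry out in detail, shows $|V_i| \cdot |V_{i+1}| $ is controlled, yielding $|V_i| \leq (2b)^i \Delta^{\floor{i/2}}$ by induction, after which summing over $i \in [0,k]$ and bounding the geometric-type sum by its largest term times $k$ gives the stated $4k(2b)^k \Delta^{\floor{k/2}} + 1$.

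More concretely, the induction hypothesis I would aim for is: $|V_i| \leq (2b)^i \Delta^{\floor{i/2}}$ for all $i$. The inductive step splits on parity. When $i+1$ is even (so $\floor{(i+1)/2} = \floor{i/2}+1 \neq \floor{i/2}$, wait — $\floor{(i+1)/2}$ when $i+1$ even equals $(i+1)/2 = \floor{i/2}+1$ since $i$ is odd), I may afford the crude $|V_{i+1}| \leq \Delta \cdot |V_i|$ together with one factor of $2b$ to spare, which is comfortable. When $i+1$ is odd, $\floor{(i+1)/2} = \floor{i/2}$, so I cannot afford a factor of $\Delta$ and must use $|V_{i+1}| \leq 2b |V_i|$ — exactly the out-degree counting above. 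So the two-regime induction closes: every odd step costs a factor $2b$ (via arboricity), every even step costs a factor $2b\Delta$ (crudely), giving $\prod \leq (2b)^k \Delta^{\floor{k/2}}$.

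The main obstacle is making the "odd step costs only $2b$" inequality fully rigorous, i.e.\ proving $|V_{i+1}| \leq 2b|V_i|$ cleanly. The subtlety is that a vertex in $V_{i+1}$ reached by an in-oriented edge from $V_i$ must be charged against the out-degree of its $V_i$-neighbour, but that $V_i$-vertex's out-neighbours could instead lie in $V_{i-1}$ or within $V_i$; so I need to observe that it suffices to charge \emph{some} neighbour — either $u \in V_{i+1}$ has an out-edge to $V_i$ (charge the head, $\leq b$ per $V_i$-vertex as a head), or every edge from $u$ to $V_i$ is oriented out of $V_i$ into $u$, in which case $u$ consumes part of the out-budget of a $V_i$-vertex (charge that, $\leq b$ per $V_i$-vertex as a tail). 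Either way each vertex of $V_i$ is charged at most $b$ times as a head and at most $b$ times as a tail, for a total of $2b$, so $|V_{i+1}| \leq 2b|V_i|$. I would write this charging argument carefully, state the parity-based induction, verify the base cases $i=0,1$, and finish with the summation.
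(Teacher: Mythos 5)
Your key inequality $|V_{i+1}| \leq 2b\,|V_i|$ is false, and the flaw is in Case 1 of the charging. You charge $u \in V_{i+1}$ to the head $w \in V_i$ of an out-edge $u \to w$ and assert that each $V_i$-vertex receives at most $b$ such charges ``as a head''. But the out-degree bound controls how often a vertex is a \emph{tail}; its in-degree (how often it is a head) can be as large as $\Delta$. Concretely: let $G$ be a tree, so $b=1$, orient every edge towards the root $r$, and run BFS from $v = r$. Then $V_1$ consists of the $\Delta$ children of $r$; each child's unique out-edge points at $r$, so all $\Delta$ of them fall into Case 1 and charge $r$. Thus $r$ is charged $\Delta$ times, and $|V_1| = \Delta$, contradicting both $|V_1| \leq 2b|V_0| = 2$ and your proposed base case $|V_1| \leq (2b)^1\Delta^{\lfloor 1/2\rfloor} = 2b$. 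Starting at a leaf does not help: as soon as the BFS front reaches an internal vertex, the next layer jumps by a factor of roughly $\Delta-1$. The obstruction is fundamental: from a high-degree hub the early BFS layers really do expand by $\Theta(\Delta)$, so no uniform layer-by-layer bound of the form $|V_{i+1}| \leq O(b)\,|V_i|$ on alternate steps can hold, and the single-source BFS framework cannot recover $\Delta^{\lfloor k/2\rfloor}$.

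The paper avoids layers altogether and instead counts unordered pairs. For each pair $\{v,w\}$ it fixes a shortest path $P$ of length $\ell \leq k$, calls an edge of $P$ \emph{forward} if oriented from $v$ towards $w$, and charges the pair to $v$ if at least $\lceil \ell/2\rceil$ edges of $P$ are forward, else to $w$. If a pair is charged to $v$, then $w$ is reached from $v$ by a walk of length $\ell$ with $i \geq \lceil \ell/2\rceil$ forward steps; since every vertex has out-degree at most $b$, the number of such walks is at most $\binom{\ell}{i}b^i\Delta^{\ell - i}$. Summing over $\lceil \ell/2\rceil \leq i \leq \ell \leq k$ shows each vertex is charged at most $2k(2b)^k\Delta^{\lfloor k/2\rfloor}$ times, so $\binom{n}{2} \leq 2k(2b)^k\Delta^{\lfloor k/2\rfloor}\,n$, which yields the theorem. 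This realises the intuition you were circling around, namely that out-steps are cheap ($\leq b$ choices) and in-steps are expensive ($\leq \Delta$ choices), and that a shortest path should be billed to whichever endpoint sees the majority of cheap steps, but it does so globally over pairs rather than locally layer by layer, which is exactly what makes the accounting close.
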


\begin{proof}
  Let $G_1,\dots,G_b$ be spanning forests of $G$ whose union is
  $G$. Orient the edges of each component of each $G_i$ towards a root
  vertex. Thus each vertex $v$ of $G$ has outdegree at most 1 in each
  $G_i$; therefore $v$ has outdegree at most $b$ in $G$.

  Consider an unordered pair of vertices $\{v,w\}$. Let $P$ be a
  shortest $vw$-path in $G$. Say $P$ has $\ell$ edges. Then $\ell\leq
  k$. An edge of $P$ oriented in the direction from $v$ to $w$ is
  called \emph{forward}. If at least $\ceil{\frac{\ell}{2}}$ of the
  edges in $P$ are forward, then charge the pair $\{v,w\}$ to $v$,
  otherwise charge $\{v,w\}$ to $w$.

  Consider a vertex $v$. If some pair $\{v,w\}$ is charged to $v$ then
  there is path of length $\ell$ from $v$ to $w$ with exactly $i$
  forward arcs, for some $i$ and $\ell$ with
  $\ceil{\frac{\ell}{2}}\leq i\leq \ell\leq k$. Since each vertex has
  outdegree at most $b$, the number of such paths is at most
  $\binom{\ell}{i} b^i\Delta^{\ell-i}$. Hence the number of pairs
  charged to $v$ is at most
  \begin{align*}
    \sum_{\ell=1}^k\sum_{i=\ceil{\ell/2}}^\ell\binom{\ell}{i}b^i\Delta^{\ell-i}
    \;\leq\;& k \sum_{i=\ceil{k/2}}^k\binom{k}{i}b^i\Delta^{k-i}
    \;=\;k \sum_{i=0}^{\floor{k/2}}\binom{k}{k-i}b^{k-i}\Delta^{i}\\
    \;\leq\;& k\,2^kb^k \sum_{i=0}^{\floor{k/2}}\Delta^{i} \;\leq\; 2k
    (2b)^k \Delta^{\floor{k/2}}\enspace.
  \end{align*}
  Hence, the total number of pairs, $\binom{n}{2}$, is at most $2k
  (2b)^k \Delta^{\floor{k/2}}n$. The result follows.
\end{proof}



We now show that the upper bound in \thmref{ArboricityUpperBound} 
is close to being best possible (for fixed $k$). 



\begin{theorem}
\thmlabel{ArboricityLowerBound}
  For all even integers $b\geq 2$ and $k\geq 4$ and $\Delta \geq b$,
  such that $\Delta\equiv 2\pmod{4}$ or $b\equiv 0\pmod{4}$, there is
  a graph $G$ with arboricity at most $b$, maximum degree at most
  $\Delta$, diameter at most $k$, and at least $\frac{8}{b^2}
  (\frac{b\Delta}{8})^{k/2}$ vertices.
\end{theorem}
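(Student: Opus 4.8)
The plan is to mimic the structure of the proof of \propref{AverageDegreeLowerBound}, but arrange the ``long'' direction of the graph to be a tree-like (hence bounded-arboricity) gadget of diameter roughly $k/2$ rather than a de Bruijn graph of diameter $k-2$. This is the source of the exponent $k/2$ instead of $k-1$: distances between leaves of a depth-$\floor{k/4}$ tree-like object are about $k/2$, and we build the construction so that any two vertices are connected through such an object. Concretely, I would take $B$ to be (roughly) a complete $r$-ary tree of depth $\approx k/4$, or more precisely a bipartite gadget in the spirit of \lemref{BipartiteLemma}/\lemref{LineGraph} that has $\Theta(r^{k/2})$ vertices, every pair of which is joined by a path of length at most $\approx k/2 - O(1)$, with all degrees $O(r)$. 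Here $r$ will be chosen as a constant times $\sqrt{b\Delta}$ (the natural scale, since the target count is $(b\Delta/8)^{k/2} = r^k$ up to constants), and the congruence hypotheses ($\Delta\equiv 2\pmod 4$ or $b\equiv 0\pmod 4$) will be exactly what is needed to make the relevant floors/divisions come out integral when setting $q$, $p$, $r$.

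The key steps, in order, are: (1) fix parameters $r,q,p$ in terms of $b$ and $\Delta$ (with $r \asymp \sqrt{b\Delta}$, $q\asymp b$, $p\asymp \Delta$), using the congruence assumption to keep them integers; (2) build the diameter-$\Theta(k/2)$ base gadget $B$ of size $\Theta(r^{k/2})$ with bounded degree and the ``every pair joined by a short path'' property — iterating the construction of \lemref{BipartiteLemma} $\Theta(k)$ times, or taking an appropriate power/product of line graphs of complete graphs; (3) form a product or blow-up $H$ of $B$ with a small clique gadget $L$ from \lemref{LineGraph} of order $q$, so that $H$ retains small arboricity and the pair-connectivity property, then attach pendant independent sets $Y$ of size $p$ (as in the earlier proof) to boost the vertex count while keeping the max degree $\le \Delta$; (4) verify the diameter is $\le k$ by concatenating a short path in $B$ with $O(1)$ steps through $L$ and the pendant sets; (5) bound the arboricity by $b$ using the Nash--Williams formula \eqnref{NashWilliams}: the pendant vertices contribute one forest, $L$ is small, and the trees composing $B$ contribute the rest, so $|E(H')|/(|V(H')|-1)$ stays $\le b$ for every subgraph $H'$; (6) count vertices: $|V(G)| \ge p\cdot|V(H)| \asymp \Delta\cdot r^{k/2}/\text{poly}(b) \ge \frac{8}{b^2}(b\Delta/8)^{k/2}$.

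I expect the main obstacle to be step (2) together with step (5): producing a base gadget that simultaneously has size $\Theta(r^{k/2})$, diameter $\Theta(k/2)$, \emph{and} the property that every pair of vertices (not just a designated ``leaf'' set) has a short connecting path, while keeping its arboricity/average degree under tight control so that adding the clique structure $L$ and the pendants $Y$ does not push the Nash--Williams ratio above $b$. The arboricity accounting is delicate because, unlike average degree, it must hold for every subgraph; the tree-like structure of $B$ is what saves us (forests have arboricity $1$), so the construction must be genuinely forest-decomposable rather than merely sparse on average — this is presumably why the hypothesis is on arboricity and why the exponent drops to $\floor{k/2}$, matching \thmref{ArboricityUpperBound}. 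Finally, chasing the constants and the parity conditions to get exactly the claimed bound $\frac{8}{b^2}(b\Delta/8)^{k/2}$ will be routine but fiddly, and I would relegate those computations to the end, as the earlier proof does.
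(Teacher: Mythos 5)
Your proposal diverges from the paper's construction in ways that create a real gap, so it is worth spelling out where the two part company.

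The paper does \emph{not} use a tree-like base gadget, and the final graph is \emph{not} a blow-up-with-pendants in the style of \propref{AverageDegreeLowerBound}. Instead, it sets $q=\Delta/2$, $p=b/2$, $\ell=k/2-1$, $r=(q+1)p/2$ (so $r\asymp b\Delta$, not $\sqrt{b\Delta}$), and takes the de Bruijn graph $B=B(r,\ell)$ with its covering family $\{B_i\}$ from \lemref{deBruijn}. Crucially, the edges of $B$ are discarded: for each $B_i$ a fresh bipartite gadget $T_i$ (from \lemref{BipartiteLemma}) is attached, introducing a new set $D_i$ of size $\binom{q+1}{2}$, and $G$ is defined to be the bipartite graph on $V(B)\cup\bigcup_i D_i$ whose only edges are those of the $T_i$. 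A $v'w'$-path of length $\le\ell$ in $B$ is simulated in $G$ by replacing each edge with a two-step detour through the common neighbour in $D_i$, so the diameter becomes $\le 2\ell+2=k$; this ``doubling by subdivision through a common-neighbour gadget'' is the mechanism that drops the exponent from $k-1$ to $k/2$, and it is absent from your plan. The arboricity bound has nothing to do with tree-likeness of the base: it follows because $G$ is bipartite with every $D$-vertex of degree at most $2p=b$, so a proper edge-colouring at the $D$-side with $b$ colours yields $b$ star forests.

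Two concrete problems with your version. First, a gadget with $\Theta(r^{k/2})$ vertices, diameter $\Theta(k/2)$ and maximum degree $O(r)$ sits at the Moore bound for that degree and diameter, so it cannot be ``tree-like'' — a depth-$(k/4)$ tree of degree $O(r)$ has only $\Theta(r^{k/4})$ vertices, and to reach $\Theta(r^{k/2})$ vertices you would need branching $r^2\asymp b\Delta$, blowing the degree budget. Second, forming $L\,\square\,B$ (or a similar product) and attaching pendants, as in \propref{AverageDegreeLowerBound}, injects the $(2q-2)$-regular cliques $L_i$ into the graph. The line graph $L$ alone has arboricity about $q\asymp b$, and once you add the $B$-edges and the pendant edges the Nash--Williams ratio of \eqnref{NashWilliams} will exceed $b$; this is precisely why the paper's construction is bipartite and contains no cliques at all. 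Your instinct that the exponent $\floor{k/2}$ should match \thmref{ArboricityUpperBound} and that \lemref{LineGraph}/\lemref{BipartiteLemma} are the right ingredients is correct, and you correctly anticipated that the base gadget must have diameter roughly $k/2$, but the edge-subdivision trick and the star-forest colouring — rather than an intrinsically forest-like base — are the missing ideas.
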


\begin{proof}
  Let $q:=\frac{\Delta}{2}$ and $p:=\frac{b}{2}$ and
  $\ell:=\frac{k}{2}-1$. Then $q$, $p$ and $\ell$ are positive
  integers. Let $r:=\frac{(q+1)p}{2}$.  Then $r$ is a positive integer
  (since $\Delta\equiv 2\pmod{4}$ or $b\equiv 0\pmod{4}$).

  Let $B$ be the de Bruijn graph $B(r,\ell)$.  By
  \lemref{deBruijn}, $B$ has diameter $\ell$ and $r^\ell$
  vertices. Moreover, there are sets $B_1,\dots,B_{r^{\ell-1}}$ of
  vertices in $B$, each containing $2r-2$ or $2r$ vertices, such that
  each vertex of $B$ is in exactly two of the $B_i$, and the endpoints
  of each edge in $B$ are in some $B_i$. Let $r_i:=|B_i|$. Thus
  $r_i\leq 2r=(q+1)p$.

  By \lemref{BipartiteLemma}, for each $i\in [1,r^{\ell-1}]$ there
  is a bipartite graph $T_i$ with bipartition $B_i,D_i$, such that
  $B_i$ consists of $r_i$ vertices each with degree $q$, and
  $D_i$ consists of $\binom{q+1}{2}$ vertices each with degree at most
  $2p\leq b$, and each pair of vertices in $B_i$ have a common neighbour in
  $D_i$.

  Let $G$ be the bipartite graph with bipartition $V(B)\cup D$, where
  $D:=\cup_i D_i$ and the induced subgraph $G[B_i,D_i]$ is $T_i$.
  In $G$, each vertex in $V(B)$ has degree $2q\leq\Delta$, and each
  vertex in $D$ has degree at most $b\leq\Delta$.  Thus $G$ has maximum degree
  $\Delta$.  Assign each edge in $G$ one of $b$ colours, such that two
  edges receive distinct colours whenever they have an endpoint in $D$
  in common. Each colour class is a star forest. Hence $G$ has
  arboricity at most $b$.  Observe that $$|V(G)|\geq |D|= 
r^{\ell-1}\tbinom{q+1}{2} \geq
  (\tfrac{b\Delta}{8})^{\ell-1}\,\tfrac{\Delta^2}{8} =
 (\tfrac{b\Delta}{8})^{k/2-2}\,\tfrac{\Delta^2}{8} =
\tfrac{8}{b^2}  (\tfrac{b\Delta}{8})^{k/2}\enspace.
$$

It remains to prove that $G$ has diameter at most $k$.  Consider two
  vertices $v$ and $w$ in $G$.  If $v\in D_i$ then let $v'$ be a
  neighbour of $v$ in $B_i$.  If $v\in B_i$ then let $v'$ be $v$.  If
  $w\in D_j$ then let $w'$ be a neighbour of $w$ in $B_j$.  If $w\in
  B_j$ then let $w'$ be $w$.  In $B$, there is a $v'w'$-path $P$ of
  length at most $\ell$.  For each edge $xy$ in $P$, both $x$ and $y$
  are in some set $B_a$ (see \lemref{deBruijn}).  Since $x$ and $y$ have a common neighbour in
  $T_a$ (by \lemref{BipartiteLemma}), we can
  replace $xy$ in $P$ by a 2-edge path in $T_a$, to obtain a
  $v'w'$-path in $G$ of length at most $2\ell$.  Possibly adding the
  edges $vv'$ or $ww'$ gives a $vw$-path in $G$ of length at most
  $2\ell+2=k$. Hence $G$ has diameter at most $k$.
\end{proof}

Consider the case of diameter $2$ graphs with arboricity $b$. Every such graph has average degree less than $2b$, and thus 
has at most $4b\Delta$ vertices by \corref{AverageDegreeUpperBound}.  We now show that this upper bound is within a constant factor of optimal. (This result is not covered by \thmref{ArboricityLowerBound} which assumes $k\geq4$.) 



\begin{proposition}
  For all integers $b\geq1$ and even $\Delta\geq 4b$ there is a graph with diameter
  $2$, arboricity at most $b$, maximum degree $\Delta$, and at
  least $\frac{b\Delta}{4}$ vertices.
\end{proposition}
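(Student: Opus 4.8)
The plan is to reuse the line‑graph gadget of \lemref{LineGraph}, in the spirit of the construction in \propref{AverageDegreeLowerBound} (this is essentially its degenerate ``$k=2$'' case, but with parameters tuned for arboricity rather than average degree). Set $q:=b$ and let $L$ be the graph from \lemref{LineGraph}: it is $(2b-2)$-regular, has $\binom{b+1}{2}$ vertices, and contains cliques $L_1,\dots,L_{b+1}$, each of order $b$, with each vertex of $L$ in exactly two of the $L_i$ and $L_i\cap L_j\neq\emptyset$ for all $i,j$. Put $p:=\frac{\Delta}{2}-b+1$, a positive integer since $\Delta$ is even and $\Delta\geq 4b\geq 2b$. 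For each $i\in[1,b+1]$ I would add an independent set $Y_i$ of $p$ new vertices, each made adjacent to every vertex of $L_i$; call the resulting graph $G$. Then $|V(G)|=\binom{b+1}{2}+(b+1)p\geq (b+1)p\geq(b+1)\big(\tfrac{\Delta}{2}-b\big)\geq(b+1)\tfrac{\Delta}{4}\geq\tfrac{b\Delta}{4}$, using $b\leq\tfrac{\Delta}{4}$ in the step $\tfrac{\Delta}{2}-b\geq\tfrac{\Delta}{4}$.

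The degree and diameter conditions are then routine bookkeeping. Every vertex of $Y_i$ has degree $|L_i|=b\leq\Delta$, and every vertex of $L$ lies in exactly two cliques $L_i,L_j$, hence has degree $(2b-2)+2p=\Delta$; so $G$ has maximum degree exactly $\Delta$. For the diameter, given $\alpha,\beta\in V(G)$, choose an index $i$ with $\alpha\in L_i$ or $\alpha\in Y_i$, and an index $j$ with $\beta\in L_j$ or $\beta\in Y_j$, and pick $\gamma\in L_i\cap L_j$. In every case $\gamma$ equals $\alpha$ or is adjacent to $\alpha$ (if $\alpha\in L_i$ they lie in the clique $L_i$; if $\alpha\in Y_i$ then $\gamma\in L_i$ is a neighbour of $\alpha$), and likewise for $\beta$, so $\dist(\alpha,\beta)\leq 2$; since $G$ has non-adjacent vertices, $\diam(G)=2$.

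The one step needing real care, and where I would spend the effort, is the arboricity bound. By the Nash--Williams formula~\eqref{NashWilliams} it suffices to prove $|E(H)|\leq b(|V(H)|-1)$ for every subgraph $H$ of $G$. Identify the vertices of $L$ with the edges of $K_{b+1}$, let $S$ be the set of edges of $K_{b+1}$ corresponding to $V(H)\cap V(L)$, write $d_S(v)$ for the number of edges of $S$ incident to a vertex $v$ of $K_{b+1}$, and let $p_v$ be the number of vertices of $Y_v$ in $H$. A vertex of $Y_v$ in $H$ is joined in $H$ to exactly the $d_S(v)$ vertices of $L_v\cap V(H)$, so $|E(H)|=\sum_v\binom{d_S(v)}{2}+\sum_v p_v d_S(v)$ while $|V(H)|=|S|+\sum_v p_v$. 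Since $K_{b+1}$ is $b$-regular, $d_S(v)\leq b$ for all $v$, whence $\sum_v p_v d_S(v)\leq b\sum_v p_v$; it thus remains to show $\sum_v\binom{d_S(v)}{2}\leq b(|S|-1)$ when $|S|\geq 1$ (the case $|S|=0$ being trivial, as then $H$ has no edges). Using $\sum_v d_S(v)=2|S|$ and $\sum_v d_S(v)^2\leq(\max_v d_S(v))\cdot 2|S|$, this follows by splitting into $|S|\geq b$ (where $\max_v d_S(v)\leq b$) and $|S|<b$ (where $\max_v d_S(v)\leq|S|$); in effect this records that $L(K_{b+1})$ has arboricity exactly $b$, which is exactly why the choice $q=b$ is forced and cannot be loosened. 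Combining the two estimates gives $|E(H)|\leq b(|S|+\sum_v p_v-1)=b(|V(H)|-1)$, so $G$ has arboricity at most $b$, completing the verification of all claimed properties.
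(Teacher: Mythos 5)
Your construction is exactly the paper's: take $L=L(K_{b+1})$ from \lemref{LineGraph}, attach an independent set of $p=\tfrac{\Delta}{2}-b+1$ vertices to each clique $L_i$, and the degree, diameter, and vertex-count calculations are identical. The one place you diverge is the arboricity check, and your version is actually the more careful of the two: the paper bounds $\sum_i\binom{x_i}{2}$ by applying $\binom{x_i}{2}\leq b(x_i-1)/2$ term-by-term, which fails for any index with $x_i=0$ (for instance, when $H$ is a single vertex of $L$ and $b\geq 2$, the paper's chain would yield $0\leq\sum_i b(x_i-1)/2<0$), whereas your reformulation via degrees $d_S(v)$ in $K_{b+1}$ together with $\sum_v d_S(v)=2|S|$ and the split $|S|\geq b$ versus $1\leq|S|<b$ gives a clean and gap-free derivation of $\sum_v\binom{d_S(v)}{2}\leq b(|S|-1)$, and it also covers $b=1$ which the paper's ``$b\geq2$'' step excludes. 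So this is the same proof strategy, with the delicate Nash--Williams verification done by a tidier degree-sum computation that avoids a small lapse in the original.
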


\begin{proof}
By \lemref{LineGraph}, there is a $(2b-2)$-regular graph $X$ with
  $\binom{b+1}{2}$ vertices, containing cliques $X_1,\dots,X_{b+1}$
  each of order $b$, such that each vertex in $X$ is in exactly two of
  the $X_i$, and $X_i\cap X_j\neq\emptyset$ for all $i,j\in[1,q+1]$.

 Initialise a graph $G$ equal to $X$. For $i\in[1,b+1]$, add an independent set $Y_i$ of
  $p:=\frac{\Delta}{2}-b+1$ vertices to $G$ completely  adjacent to $X_i$.

  Consider two vertices $v$ and $w$ in $G$. Say $v\in X_i\cup Y_i$ and
  $w\in X_j\cup Y_j$. Let $x$ be the vertex in $X_i\cap X_j$. If $v=x$
  or $x=w$ then $vw$ is an edge in $G$, otherwise $vxw$ is a path in
  $G$. Thus $G$ has diameter 2.

  Vertices in each $X_i$ have degree $2b-2+2p=\Delta$ and vertices
  in each $Y_i$ have degree $b\leq\Delta$. Hence $G$ has maximum
  degree $\Delta$.
  The number of vertices in $G$ is more than 
$(b+1)p =(b+1)( \frac{\Delta}{2} -b+1) \geq \frac{b\Delta}{4}$. 

  To calculate the arboricity of $G$, consider a subgraph $H$ of
  $G$. Let $x_i:=|X_i\cap V(H)|$ and $y_i:=|Y_i\cap V(H)|$. Since
  $x_i\leq|X_i|=b$ and $b\geq 2$,
$$
\sum_i \tbinom{x_i}{2} 
=\sum_i \tfrac{x_i(x_i-1)}{2} 
\leq \sum_i\tfrac{b(x_i-1)}{2} 
= \sum_i \tfrac{bx_i-b}{2} 
< (\sum_i \tfrac{bx_i}{2}) -b\enspace.$$ 
Since $x_iy_i\leq|X_i|y_i= by_i$,
$$
\sum_i \tbinom{x_i}{2}+x_iy_i 
\leq (\sum_i \tfrac{bx_i}{2}+by_i) -b 
= b \big((\sum_i \tfrac{x_i}{2}+y_i) -1\big) \enspace.$$ 
Observe that
$|E(H)|\leq \sum_i \binom{x_i}{2}+x_iy_i$ and $|V(H)|\geq
\sum_i\tfrac{x_i}{2}+y_i$ (since each vertex in $X$ is in exactly two
of the $X_i$). Thus $|E(H)| \leq b( |V(H)| - 1 )$, and $G$ has arboricity at most $b$ by
\eqref{NashWilliams}.
\end{proof}





We conclude this section with an open problem about the degree-diameter problem for graphs containing no $K_t$-minor. Every such graph has arboricity at most $ct\sqrt{\log t}$, for some constant $c>0$; see \citep{Kostochka82,Thomason01,Thomason84}. Thus
\thmref{ArboricityUpperBound} implies that for every $K_t$-minor-free graph $G$ with diameter $k$ and maximum  degree $\Delta\gg t$, 
$$|V(G)| \leq 4k (c t\sqrt{\log t} )^k \Delta^{\floor{k/2}}.$$
Improving the  $f(t,k)$ term in this $f(t,k)\,\Delta^{\floor{k/2}}$ bound is a challenging open problem.

\section{Separators and Treewidth}
\seclabel{Separators}

This section studies a separator-based approach for proving upper bounds in the degree-diameter problem. A \emph{separation} of order $s$ in an $n$-vertex graph $G$ is a partition $(A,S,B)$ of $V(G)$, such that $|A|\leq\frac23 n$ and $|B|\leq\frac23 n$ and $|S|\leq s$ and there is no edge between $A$ and $B$. \citet{FHS95} first used separators to prove upper bounds in the degree--diameter problem. In particular, they implicitly proved that every graph that has a separation of order $s$ has $3s\,M(\Delta,\floor{\frac{k}{2}})$ vertices. The following lemma improves the dependence on $s$ in this result when $k$ is even. We include the proof by \citet{FHS95} for completeness.

\begin{lemma}
\lemlabel{NewNewSep}
Let $G$ be a graph with maximum degree at most $\Delta$, and diameter at most $k$. Assume $(A,S,B)$ is a separation of order $s$ in $G$. Then 
\begin{equation*}
|V(G)|\leq \begin{cases}
3s\,M(\Delta,\frac{k-1}{2}) & \text{ if $k$ is odd}\\
\frac{3}{2}\sqrt{s}\,\Delta(\Delta-1)^{k/2-1} \,+\,3s\,M(\Delta,\frac{k}{2}-1) & \text{ if $k$ is even}\enspace.
\end{cases}
\end{equation*}
\end{lemma}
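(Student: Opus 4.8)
The plan is to bound $|V(G)|$ by a counting argument anchored at the separator $S$. The key observation is that since there is no edge between $A$ and $B$, every vertex of $G$ lies within distance $\lfloor k/2\rfloor$ of $S$ — indeed, for a vertex $v\in A$ and any $w\in B$ a shortest $vw$-path has length at most $k$ and must cross $S$, so $\dist(v,S)\le \lceil k/2\rceil$; more carefully, if $v\in A$ then either $v$ is close to $S$ or (taking $w\in B$ farthest, or averaging over the crossing point) one gets $\dist(v,S)\le\lfloor k/2\rfloor$ unless $k$ is odd. So first I would prove the clean bound: every vertex is at distance at most $\lceil\frac{k}{2}\rceil$ from some vertex of $S$, and when $k$ is odd the symmetric roles of $A$ and $B$ let us sharpen this, while when $k$ is even the vertices at the extreme distance $k/2$ need separate, finer treatment. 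This dichotomy is exactly what produces the two cases in the statement.

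For the odd case, $k=2m+1$: each vertex is within distance $m$ of $S$ (I would argue: if $v\in A$ had $\dist(v,S)\ge m+1$, then picking $w\in B$ — which exists, else $G$ has $\le |A|+|S|$ vertices and a stronger bound holds trivially, or handle $B=\emptyset$ as a degenerate subcase — any $vw$-path has length $\ge (m+1)+1>k$, contradiction; symmetrically for $v\in B$). Then $|V(G)|\le \sum_{u\in S}|\{v:\dist(v,u)\le m\}|\le |S|\,M(\Delta,m)\le s\,M(\Delta,\frac{k-1}{2})$. The factor $3s$ rather than $s$ in the statement is slack the authors keep for uniformity with the even case / the \citet{FHS95} bound; I would just note $s\le 3s$.

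For the even case, $k=2m$: the same argument gives $\dist(v,S)\le m$ for all $v$, but now vertices at distance exactly $m$ from $S$ can lie in $A$ only if they are distance $\le m$ from \emph{every} vertex of $B$ reached through $S$, which pins down strong structure. The refined idea: a vertex $v\in A$ with $\dist(v,S)=m$ forces, for every $w\in B$, that the shortest $vw$-path has its $S$-crossing vertex at distance exactly $m$ from $v$ and $0$ from... — more usefully, count vertices at distance exactly $m$ from $S$ separately. There are at most $|S|\cdot\Delta(\Delta-1)^{m-1}$ vertices at distance exactly $m$ from $S$ naively, but the $\sqrt{s}$ improvement must come from the fact that such far vertices in $A$ and in $B$ cannot both be plentiful and cannot be reached from too many distinct separator vertices; pairing this against the bipartite-type constraint (a far vertex in $A$ at distance $m$ from $u\in S$ and a far vertex in $B$ at distance $m$ from $u'\in S$ give a path of length $\ge m+1+m'$... ) yields that the number of such "extremal" vertices is $O(\sqrt{s}\,\Delta(\Delta-1)^{m-1})$, while all remaining vertices are within distance $m-1$ of $S$, contributing $\le 3s\,M(\Delta,m-1)$. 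Summing gives the claimed bound with $\frac32\sqrt{s}$.

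The main obstacle is the even case and specifically extracting the $\sqrt{s}$ (rather than $s$) coefficient on the leading $\Delta(\Delta-1)^{k/2-1}$ term. The honest mechanism is presumably a Cauchy–Schwarz / convexity argument: let $a_u$ (resp.\ $b_u$) be the number of distance-exactly-$m$ vertices of $A$ (resp.\ $B$) whose nearest separator vertex is $u\in S$; distance constraints force $a_u b_{u'}=0$ for many pairs $(u,u')$, in fact force the "$A$-side support" and "$B$-side support" in $S$ to interact so restrictively that $\sum_u a_u\le \sqrt{s}\cdot\max_u(\dots)$ and likewise for $b$; combining with $\sum_u(a_u+b_u)\le |S|\Delta(\Delta-1)^{m-1}$ distributed over $\le s$ values of $u$ gives the square-root saving. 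I would set up these counts carefully, invoke the Moore-type per-vertex bound $\Delta(\Delta-1)^{m-1}$ for the sphere of radius $m$, handle the degenerate cases $A=\emptyset$ or $B=\emptyset$ separately (where the bound is immediate), and then present the \citet{FHS95} argument verbatim for the odd case as the paper promises.
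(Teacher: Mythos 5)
Your odd-case argument has a genuine gap. You claim that every vertex of $G$ is within distance $m=(k-1)/2$ of $S$, arguing that if $v\in A$ had $\dist(v,S)\geq m+1$ then any $vw$-path with $w\in B$ would have length $\geq (m+1)+1>k$. But $(m+1)+1=m+2\leq 2m+1=k$ for all $m\geq 1$, so there is no contradiction: a vertex $v\in A$ far from $S$ is perfectly compatible with $w\in B$ being close to $S$. What is true is that it is impossible for \emph{both} some $v\in A$ and some $w\in B$ to be at distance $\geq\floor{k/2}+1$ from $S$; after relabelling, \emph{all of $A$} (say) is within distance $\floor{k/2}$ of $S$, which bounds only $|A|$. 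The factor $3$ is therefore essential, not slack as you suggest: the balanced-separation guarantee $|A|\leq\tfrac23 n$ and $|B|\leq\tfrac23 n$ gives $|A|\geq\tfrac{n}{3}-s$, and combined with $|A|\leq s\,M(\Delta,\floor{k/2})-s$ this yields $n\leq 3s\,M(\Delta,\floor{k/2})$.

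For the even case you candidly leave a placeholder, and the mechanism you guess (Cauchy--Schwarz on separator ``supports'' with $a_u b_{u'}=0$ for many pairs) is not what makes the $\sqrt{s}$ appear. The paper's argument is a direct double count. Writing $\ell=k/2$, one first reduces to the situation where every vertex of $A\cup B$ is within distance $\ell$ of $S$ (otherwise the odd-case reasoning already gives the stronger bound $3s\,M(\Delta,\ell-1)$ on one side). Let $A'$, $B'$ be the vertices of $A$, $B$ at distance exactly $\ell$ from $S$. Every pair $(x,y)\in A'\times B'$ has a vertex $v\in S$ at distance exactly $\ell$ from both $x$ and $y$; charge $(x,y)$ to such a $v$. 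If $v$ has $a$ neighbours in $A$ and $b$ in $B$, then $a+b\leq\Delta$ and the number of pairs charged to $v$ is at most $a(\Delta-1)^{\ell-1}\cdot b(\Delta-1)^{\ell-1}\leq\tfrac14\Delta^2(\Delta-1)^{2\ell-2}$ by AM--GM. Summing over the $\leq s$ vertices of $S$ gives $|A'|\cdot|B'|\leq\tfrac{s}{4}\Delta^2(\Delta-1)^{2\ell-2}$, while the Moore bound on $A\setminus A'$, $B\setminus B'$ together with $|A|,|B|\geq\tfrac{n}{3}-s$ forces $|A'|,|B'|>\tfrac{\sqrt{s}}{2}\Delta(\Delta-1)^{\ell-1}$ under the assumption that the claimed bound fails. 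Multiplying these lower bounds contradicts the pair count. It is this product $|A'|\cdot|B'|\leq s\cdot(\text{per-vertex constant})$ versus $|A'|,|B'|$ individually that creates the $\sqrt{s}$; a per-side support decomposition of the sort you sketch does not produce it.
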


\begin{proof}
Let $n:=|V(G)|$. Note that $|A|\geq n-|B|-s\geq\frac{n}{3}-s$. By symmetry, $|B|\geq \frac{n}{3}-s$. We use this fact repeatedly. 

For $v\in A\cup B$, let $\dist(v,S):=\min\{\dist(v,x):x\in S\}$. 
If $\dist(v,S)\geq\floor{k/2}+1$ for some $v\in A$ and 
$\dist(w,S)\geq\floor{k/2}+1$ for some $w\in B$, then 
$\dist(v,w)\geq2\floor{k/2}+2\geq k+1$, which is a contradiction. 
Hence, without loss of generality, 
$\dist(v,S)\leq\floor{k/2}$ for each $v\in A$. 
By the Moore bound, for each vertex $x\in S$, there are at most $M(\Delta,\floor{k/2})-1$
vertices in $A$ at distance at most $\floor{k/2}$ from $x$. 
Each vertex in $A$ is thus counted. 
Hence $$\frac{n}{3}-s\leq|A|\leq s\,M(\Delta,\floor{k/2})-s\enspace,$$ 
implying $n\leq 3s\,M(\Delta,\floor{k/2})$. 
This proves the result of \citet{FHS95} mentioned above, and proves the case of odd $k$ in the theorem. 

Now assume that $k=2\ell$ is even. 
Suppose on the contrary that $$\frac{n}{3}>\frac{\sqrt{s}}{2}\,\Delta(\Delta-1)^{\ell-1} \,+\,s\,M(\Delta,\ell-1)\enspace.$$
First consider the case in which some vertex in $A$ is at distance at least $\ell+1$ from $S$. 
Thus every vertex in $B$ is at distance at most $\ell-1$ from $S$. By the Moore bound,   
$$s\,M(\Delta,\ell-1) -s \geq
|B| \geq \frac{n}{3}-s > \frac{\sqrt{s}}{2}\,M(\Delta,\ell) \,+\,s\,M(\Delta,\ell-1) -s
\enspace,$$
which is a contradiction.
Now assume that every vertex in $A$ is at distance at most $\ell$ from $S$. 
By symmetry, every vertex in $B$ is at distance at most $\ell$ from $S$. 

Let $A'$ and $B'$ be the subsets of $A$ and $B$ respectively at distance exactly $\ell$ from $S$. 
By the Moore bound,  $|A-A'|\leq s\,M(\Delta,\ell-1)-s$. 
Hence $$|A'|=|A|-|A-A'|\geq \frac{n}{3}-s-s\,M(\Delta,\ell-1)+s>\frac{\sqrt{s}}{2}\,\Delta(\Delta-1)^{\ell-1} \enspace.$$
By symmetry, $|B'|> \frac{\sqrt{s}}{2}\,\Delta(\Delta-1)^{\ell-1}$. 

Let $P:=\{(x,y):x\in A',y\in B'\}$. For each pair $(x,y)\in P$, some vertex $v$ in $S$ is at distance $\ell$ from both $x$ and $y$. Charge  $(x,y)$ to $v$. We now bound the number of pairs in $P$ charged to each vertex $v\in S$. 
Say $v$ has degree $a$ in $A$ and degree $b$ in $B$. Thus $a+b \leq \Delta$. There are at most
$a(\Delta-1)^{\ell-1}$ vertices at distance exactly $\ell$ from $v$ in $A$, and
there at most $b(\Delta-1)^{\ell-1}$ vertices at distance exactly $\ell$ from
$v$ in $B$. Thus the number of pairs charged to $v$ is at most
$$ab(\Delta-1)^{2\ell-2} \leq \quarter(a+b)^2(\Delta-1)^{2\ell-2} \leq \quarter\Delta^2(\Delta-1)^{2\ell-2}\enspace.$$ 
Hence
\begin{align*}
\frac{s}{4}\,\Delta^2(\Delta-1)^{2\ell-2}=\left(\frac{\sqrt{s}}{2}\,\Delta(\Delta-1)^{\ell-1}\right)^2 <|A'|\cdot|B'|=|P|
\leq \frac{s}{4}\Delta^2(\Delta-1)^{2\ell-2}\enspace.
\end{align*}
This contradiction proves that 
$n\leq\frac{3}{2}\sqrt{s}\,\Delta(\Delta-1)^{\ell-1} \,+\,3s\,M(\Delta,\ell-1)$. 
\end{proof}

\lemref{NewNewSep} can be written in the following convenient form.

\begin{lemma}
\lemlabel{NewSep}
For all $\epsilon>0$ there is a constant $c_\epsilon$ such that for every  graph $G$ with maximum degree  $\Delta$, diameter $k$, and a separation of order $s$, 
\begin{equation*}
|V(G)|\leq \begin{cases}
(3+\epsilon)s(\Delta-1)^{(k-1)/2} & \text{ if $k$ is odd and $\Delta\geq c_\epsilon$}\\
(\frac{3}{2}+\epsilon)\sqrt{s}\,(\Delta-1)^{k/2} & \text{ if $k$ is even and $\Delta\geq c_{\epsilon}\sqrt{s}$}
\enspace.
\end{cases}
\end{equation*}
\end{lemma}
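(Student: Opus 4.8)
The plan is to derive \lemref{NewSep} directly from \lemref{NewNewSep} by absorbing the lower-order terms into the constant, exploiting the assumption that $\Delta$ is large (compared to a constant, or compared to $\sqrt{s}$ in the even case). The only real content is that the Moore-bound terms $M(\Delta,\cdot)$ are polynomial in $\Delta$ with leading term $\Delta(\Delta-1)^{m-1}\approx(\Delta-1)^m$, and that a term of one lower degree in $\Delta$ can be made an arbitrarily small fraction of the main term once $\Delta$ exceeds a threshold depending only on $\epsilon$ (and $s$, in the even case, but in a way that is swallowed by the stated hypothesis $\Delta\geq c_\epsilon\sqrt{s}$).

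First I would handle the odd case. Write $k=2m+1$, so $\floor{k/2}=m=\tfrac{k-1}{2}$. By \lemref{NewNewSep}, $|V(G)|\leq 3s\,M(\Delta,m)$. Now $M(\Delta,m)=\dfrac{\Delta(\Delta-1)^m-2}{\Delta-2}$; I would bound this above by $(1+\epsilon/3)(\Delta-1)^m$ for all $\Delta\geq c_\epsilon$, where $c_\epsilon$ is chosen large enough. Indeed $\dfrac{\Delta(\Delta-1)^m}{(\Delta-2)(\Delta-1)^m}=\dfrac{\Delta}{\Delta-2}\to1$, so for $\Delta\geq c_\epsilon$ this ratio is at most $1+\epsilon/3$, and dropping the $-2$ in the numerator only helps. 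Hence $|V(G)|\leq 3s(1+\epsilon/3)(\Delta-1)^m=(3+\epsilon)s(\Delta-1)^{(k-1)/2}$, as required.

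Next the even case. Write $k=2\ell$, so $\tfrac{k}{2}=\ell$ and $\tfrac{k}{2}-1=\ell-1$. By \lemref{NewNewSep}, $|V(G)|\leq \tfrac{3}{2}\sqrt{s}\,\Delta(\Delta-1)^{\ell-1}+3s\,M(\Delta,\ell-1)$. I would bound the first term by $\tfrac{3}{2}\sqrt{s}\,(1+\epsilon')(\Delta-1)^\ell$ using $\Delta\leq(1+\epsilon')(\Delta-1)$ for $\Delta$ large, and bound the second term by $3s\cdot 2(\Delta-1)^{\ell-1}\leq 6s(\Delta-1)^{\ell-1}$ using the same Moore-bound estimate as above (with any fixed constant, say $2$, in place of $1+\epsilon/3$). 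The key point is that $6s(\Delta-1)^{\ell-1}$ is at most $\epsilon''\sqrt{s}\,(\Delta-1)^\ell$ precisely when $(\Delta-1)\geq 6\sqrt{s}/\epsilon''$, i.e.\ when $\Delta\geq c_\epsilon\sqrt{s}$ for a suitable $c_\epsilon$; this is exactly the hypothesis in the even row. Combining, $|V(G)|\leq(\tfrac{3}{2}+\epsilon)\sqrt{s}\,(\Delta-1)^{k/2}$ after choosing $\epsilon',\epsilon''$ so that $\tfrac{3}{2}(1+\epsilon')+\epsilon''\leq\tfrac{3}{2}+\epsilon$.

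I do not expect a genuine obstacle here; the lemma is essentially a cosmetic repackaging of \lemref{NewNewSep}. The one thing to be careful about is the even case: the error term $3s\,M(\Delta,\ell-1)$ is of order $s(\Delta-1)^{\ell-1}$, which is \emph{not} dominated by $\sqrt{s}(\Delta-1)^\ell$ for all $\Delta$ — it requires $\Delta$ to grow at least like $\sqrt{s}$, which is why the hypothesis in the even case is $\Delta\geq c_\epsilon\sqrt{s}$ rather than merely $\Delta\geq c_\epsilon$. Making the dependence of $c_\epsilon$ on $\epsilon$ explicit (it behaves like $1/\epsilon$) is routine and I would just state it. One should also note the degenerate small cases (e.g.\ $\ell=1$, where $M(\Delta,0)=1$); these only make the error term smaller, so the same bound holds a fortiori.
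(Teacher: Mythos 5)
Your proposal is correct and follows essentially the same route as the paper's own proof: apply \lemref{NewNewSep}, bound $M(\Delta,\cdot)$ by a multiple of $(\Delta-1)^{\cdot}$ using $\frac{\Delta}{\Delta-2}\to1$, and absorb the lower-order $3s\,M(\Delta,\ell-1)$ term in the even case into $\epsilon\sqrt{s}(\Delta-1)^{k/2}$, which is exactly what the hypothesis $\Delta\geq c_\epsilon\sqrt{s}$ is there for. The paper chooses explicit thresholds (e.g.\ $\Delta\geq\frac{6}{\epsilon}+2$ for odd $k$, and $\Delta\geq\frac{9}{\epsilon}\sqrt{s}+2$ for even $k$) and keeps $\frac{\Delta}{\Delta-2}$ rather than replacing it by $2$, but the substance is identical.
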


\begin{proof}
First consider the the odd $k$ case. For $\Delta\geq\frac{6}{\epsilon}+2$ we have $3(\tfrac{\Delta}{\Delta-2})\leq3+\epsilon$. Thus, by \lemref{NewNewSep} and the Moore bound, 
\begin{equation*}
|V(G)|
\;\leq\;
3s\,(\tfrac{\Delta}{\Delta-2})(\Delta-1)^{(k-1)/2} 
\;\leq\;
(3+\epsilon)s(\Delta-1)^{(k-1)/2}\enspace.
\end{equation*}
Now consider the even $k$ case. For $\Delta\geq\tfrac{3}{\epsilon}+1$ we have
$\tfrac{3}{2}\,\Delta\leq(\tfrac{3}{2}+\tfrac{\epsilon}{2})\,(\Delta-1)$. And for 
$\Delta\geq \tfrac{9}{\epsilon}\sqrt{s}+2$ we have
$3\sqrt{s} \;\leq\; \tfrac{\epsilon}{3}(\Delta-2) \leq \tfrac{\epsilon}{2}(\tfrac{\Delta-1}{\Delta})(\Delta-2)$, implying
$3s\,(\tfrac{\Delta}{\Delta-2}) \leq \tfrac{\epsilon}{2}\sqrt{s}(\Delta-1)$. Hence, by \lemref{NewNewSep} and the Moore bound, 
\begin{align*}
|V(G)|
&\;\leq\;
\tfrac{3}{2}\sqrt{s}\,\Delta(\Delta-1)^{k/2-1}+3s\,(\tfrac{\Delta}{\Delta-2})(\Delta-1)^{k/2-1} \\
&\;\leq\;
(\tfrac{3}{2}+\tfrac{\epsilon}{2})\sqrt{s}\,(\Delta-1)^{k/2}+\tfrac{\epsilon}{2}\sqrt{s}(\Delta-1)^{k/2} \\
&\;\leq\;
(\tfrac{3}{2}+\epsilon)\sqrt{s}\,(\Delta-1)^{k/2}\enspace.\qedhere
\end{align*}
\end{proof}

Treewidth is a key topic when studying separators. In particular, every graph with treewidth $t$ has a separation of order $t+1$, and in fact, a converse result holds~\citep{Reed97}. Thus \lemref{NewSep} implies:

\begin{theorem}
\thmlabel{Treewidth}
For all $\epsilon>0$ there is a constant $c_\epsilon$ such that for every  graph $G$ with maximum degree  $\Delta$, treewidth  $t$, and diameter $k$,
\begin{equation*}
|V(G)|\leq \begin{cases}
(3+\epsilon)(t+1)(\Delta-1)^{(k-1)/2} & \text{ if $k$ is odd and $\Delta\geq c_\epsilon$}\\
(\frac{3}{2}+\epsilon)\sqrt{t+1}\,(\Delta-1)^{k/2} & \text{ if $k$ is even and $\Delta\geq c_{\epsilon}\sqrt{t+1}$}
\enspace.
\end{cases}
\end{equation*}
\end{theorem}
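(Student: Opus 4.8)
The plan is to derive \thmref{Treewidth} directly from \lemref{NewSep} by exploiting the standard connection between treewidth and balanced separators. First I would recall that any graph $G$ with treewidth $t$ has a bag of its tree-decomposition whose removal splits the remaining vertices into parts each of size at most $\frac{2}{3}|V(G)|$; more precisely, there is a separation $(A,S,B)$ of $G$ with $|S|\leq t+1$ satisfying the size constraints $|A|,|B|\leq\frac23|V(G)|$ in the definition used in \secref{Separators}. This is the well-known fact that a tree-decomposition of width $t$ yields a separator of order $t+1$ (see \citep{Reed97}); the existence of such a ``centroid'' bag follows by the usual argument of orienting each edge of the decomposition tree towards the heavier side and taking a sink.

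Having produced a separation of order $s=t+1$, I would simply invoke \lemref{NewSep} with this value of $s$. In the odd-$k$ case it gives $|V(G)|\leq(3+\epsilon)(t+1)(\Delta-1)^{(k-1)/2}$ whenever $\Delta\geq c_\epsilon$, and in the even-$k$ case it gives $|V(G)|\leq(\frac32+\epsilon)\sqrt{t+1}\,(\Delta-1)^{k/2}$ whenever $\Delta\geq c_\epsilon\sqrt{s}=c_\epsilon\sqrt{t+1}$. These are exactly the two bounds claimed in the theorem, with the same constant $c_\epsilon$ (or, if one wants to be careful, a constant obtained by a trivial adjustment). So the proof is a two-line deduction once the separator fact is in hand.

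The only genuine content is therefore the treewidth-to-separation step, and even that is entirely standard; the mild subtlety is making sure the separator has order at most $t+1$ (not, say, $t+2$) and that the two large parts genuinely satisfy the $\frac23$-balance condition required by the definition of a separation in this paper — a balanced bag of a width-$t$ decomposition does this, since bags have size at most $t+1$. There is no real obstacle: the inequality in \lemref{NewSep} was tailored precisely so that substituting $s=t+1$ yields \thmref{Treewidth} verbatim. I would end by remarking (as the paper does elsewhere) that a converse holds, so bounded treewidth is essentially equivalent to having small balanced separators, which is why this theorem is best possible up to the value of the constants.
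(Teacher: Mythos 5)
Your proposal is correct and matches the paper's argument exactly: the paper deduces \thmref{Treewidth} from \lemref{NewSep} by citing the standard fact that treewidth $t$ guarantees a balanced separation of order $t+1$ (via \citep{Reed97}), and then substituting $s=t+1$. Nothing more is needed.
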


Note that  \thmref{Treewidth} in the case of odd $k$ can also be concluded from a result by \citet[Theorem~3.2]{GPRS01}. Our original contribution is for the even $k$ case. We now show that both upper bounds in \thmref{Treewidth} are within a constant factor of optimal. 

\begin{proposition}
\proplabel{TreewidthConstruction}
For all integers $k\geq 1$ and $t\geq 2$ and $\Delta$ there is a graph $G$ with maximum degree  $\Delta$, diameter $k$, treewidth at most $t$, and  
\begin{equation*}
|V(G)|\geq \begin{cases}
\half(t+1)(\Delta-1)^{(k-1)/2} & \text{ if $k$ is odd and $\Delta\geq 2t-2$}\\
\half\sqrt{t+1}\,(\Delta-1)^{k/2} & \text{ if $k$ is even and $\Delta\geq 4\sqrt{2t}$}\enspace.
\end{cases}
\end{equation*}
\end{proposition}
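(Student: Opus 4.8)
The plan is to match the upper bounds in \thmref{Treewidth} by a direct construction, using the same building blocks (the de Bruijn graph and \lemref{BipartiteLemma}) that powered the arboricity lower bound, but now keeping the treewidth small rather than the arboricity. The target shapes are clear: for odd $k$ we want roughly $(t+1)(\Delta-1)^{(k-1)/2}$ vertices, which suggests using $t+1$ ``columns'' each carrying a de Bruijn-like graph of order about $(\Delta-1)^{(k-1)/2}$; for even $k$ we want $\sqrt{t+1}\,(\Delta-1)^{k/2}$, which suggests a square-root number of columns (so that the number of pairs of columns is about $t$) combined with the bipartite gadget that halves the effective diameter.

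First I would handle the odd case, say $k=2\ell+1$. Take $B:=B(r,\ell)$ from \lemref{deBruijn} with $r:=\floor{\frac{\Delta-1}{2}}$, so $B$ has maximum degree at most $\Delta-1$, diameter $\ell$, and $r^\ell$ vertices. Form $G$ by taking roughly $\frac{t+1}{2}$ disjoint copies of $B$ together with one extra ``hub'' path or hub vertex structure that links all copies: concretely, add a dominating vertex $u$ adjacent to one designated vertex in each copy, or better, glue the copies along a common clique of size about $t$ so that treewidth stays $\le t$ while any two vertices in different copies are joined by a path of length at most $2\ell+1=k$ through the glue. One must check that the glue does not blow up the degree beyond $\Delta$ — this is why $r$ is chosen a little below $\Delta/2$, leaving degree budget for the connecting edges. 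The treewidth bound follows because a disjoint union of graphs of treewidth $w$, glued along a clique of size $t-w$, has treewidth at most $t$, and de Bruijn graphs have bounded treewidth… actually they do not, so instead I would take $B$ itself to be replaced by a tree-like construction, or simply use that each copy plus the shared clique can be given a tree-decomposition of width $t$ by placing the whole small copy in bags — here the right move is to make each ``column'' a star-like or path-like gadget of the bipartite type rather than a genuine de Bruijn graph.

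Reconsidering, the cleaner route (and the one I expect the authors take) mirrors \thmref{ArboricityLowerBound} exactly: for even $k=2\ell+2$, use $B(r,\ell)$ with the covering sets $B_i$, attach via \lemref{BipartiteLemma} a bipartite gadget $T_i$ on each $B_i$ with $\binom{q+1}{2}$ ``$D$-vertices'' where $q\approx\sqrt{t}$, and take $G$ to be the union. Any two vertices are within $2\ell+2=k$ as in that proof. The number of vertices is about $r^{\ell-1}\binom{q+1}{2}\approx\sqrt{t+1}(\Delta-1)^{k/2}$ once $r\approx(\Delta-1)/2$ and $q\approx\sqrt{2t}$, matching the claim with the constant $\tfrac12$. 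For odd $k$ one does the same with a path of length $2\ell+1$: use $B(r,\ell)$ but attach the $D$-vertices on only one side, or append one extra pendant layer, giving the linear-in-$t$ count. The genuinely new thing to verify is the \emph{treewidth} bound: I would exhibit an explicit tree-decomposition of $G$ of width $\approx t$. Since each $D$-vertex has low degree and each gadget $T_i$ sits over a set $B_i$ of size $O(\sqrt{t}\cdot p)$, and the $B_i$ form a sparse cover of a bounded-degree-ish structure, the decomposition can follow a tree-decomposition of the ``shape graph'' $B$ (or rather of an auxiliary tree) with each bag enlarged by the relevant $B_i\cup D_i$; bounding $|B_i\cup D_i|$ and the overlap gives width $O(t)$, and one tunes constants so it is exactly $\le t$.

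The main obstacle is precisely controlling the treewidth: de Bruijn graphs are expanders and have treewidth $\Theta(r)$, so one cannot afford to carry a whole de Bruijn graph inside bags. The resolution is that for the treewidth construction we are \emph{not} forced to use $B(r,\ell)$ with $\ell\ge2$ — because the factor we are chasing is only $(\Delta-1)^{(k-1)/2}$ or $(\Delta-1)^{k/2}$, i.e. a single power per unit of diameter, the right ``column'' object is a simple broom/spider of depth $\ell$ rather than a de Bruijn graph, and a union of $O(t)$ (or $O(\sqrt t)$) brooms sharing a small common clique genuinely has treewidth $O(t)$. So the plan is: (i) build one broom of depth $\ell:=\floor{k/2}$ with branching $\approx\Delta-1$, having $\approx(\Delta-1)^\ell$ leaves; (ii) take $\Theta(t)$ (odd $k$) or $\Theta(\sqrt t)$ (even $k$, then double via the bipartite trick on pairs of columns) such brooms; (iii) identify their root-cliques into one shared clique of size $\le t$, or for even $k$ route between columns through a bipartite layer à la \lemref{BipartiteLemma} so every pair of columns has a length-$2$ connector, keeping diameter $\le k$; (iv) write down the tree-decomposition (one bag per broom-edge, union with the shared clique) to certify treewidth $\le t$; (v) check the vertex count and degree bounds, which is where the hypotheses $\Delta\ge 2t-2$ and $\Delta\ge4\sqrt{2t}$ are used to leave room for the connecting edges. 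Everything after (iii) is routine bookkeeping; step (iv), getting the width exactly $\le t$ rather than $O(t)$, is the part needing care.
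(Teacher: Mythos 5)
Your final plan — disjoint trees (``brooms'') of the right depth, glued via a small clique for odd $k$ or via a line-graph-of-$K_{q+1}$ gadget for even $k$, with the crucial self-correction that de Bruijn graphs have too large treewidth to serve as the columns — is exactly the paper's construction, so the approach is essentially the same. Two details worth fixing: for odd $k$ the shared clique has size $t+1$ (not $\le t$); the resulting chordal supergraph has max clique $t+1$ and hence treewidth $t$. For even $k$ the paper uses Lemma~\ref{lem:LineGraph} directly (the line graph $L$ of $K_{q+1}$ with $q$ the largest integer satisfying $\binom{q+1}{2}\le t+1$), attaching $\Delta-2(q-1)$ trees to each clique $L_i$ and cliquing $V(L)$ to certify treewidth $\le t$; it does not introduce the extra bipartite $D$-layer of Lemma~\ref{lem:BipartiteLemma}, which is what lets the max clique size stay at $\binom{q+1}{2}$ and keeps the bookkeeping clean.
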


\begin{proof}
First consider the case of odd $k$. Let $T$ be the rooted tree such that the root vertex has degree $\Delta-t$, every non-root non-leaf vertex has   degree $\Delta$, and the distance between the root and each leaf
  equals $\frac{k-1}{2}$.  Since $t\geq2$ and $\frac{\Delta-t}{\Delta-2}\geq\frac{1}{2}$,
  \begin{align*}
|V(T)|\;=\;   1+(\Delta-t)\sum_{i=0}^{(k-3)/2} (\Delta-1)^i 
& \;=\; \frac{t-2+(\Delta-t)(\Delta-1)^{(k-1)/2}}{\Delta-2}\\
&\;\geq\;  \frac{1}{2}(\Delta-1)^{(k-1)/2}
  \enspace.
  \end{align*}
Take $t+1$ disjoint copies of $T$, and add a clique on their roots. This graph is chordal with maximum clique size $t+1$. Thus it has treewidth $t$. The maximum degree is $\Delta$ and the number of vertices is at least $\half(t+1)(\Delta-1)^{(k-1)/2}$.

Now consider the case of even $k$. Let $q$ be the maximum integer such that $\binom{q+1}{2}\leq t+1$. Thus $2\leq q\leq\sqrt{2t}\leq \frac{\Delta}{4}$ and $q+1\geq\sqrt{t+1}$. 
Let $T$ be the tree, rooted at $r$, such that $r$ has degree $\Delta-q$, every non-leaf non-root vertex has degree $\Delta$, and the distance between $r$ and each leaf is $\frac{k}{2}-1$. Since $q\geq 2$ and $\frac{\Delta-q}{\Delta-2}\geq\half$, 
  \begin{align*}
|V(T)|\;=\;   1+(\Delta-q)\sum_{i=0}^{k/2-2} (\Delta-1)^i 
\;& =\; \frac{q-2+(\Delta-q)(\Delta-1)^{k/2-1}}{\Delta-2}\\
&\geq\;  \frac{1}{2}(\Delta-1)^{k/2-1}
  \enspace.
  \end{align*}
By \lemref{LineGraph},  there is a $(2q-2)$-regular graph $L$ with $\binom{q+1}{2}$ vertices, containing cliques $L_1,\dots,L_{q+1}$ each of order $q$, such that each vertex in $L$   is in exactly two of the $L_i$, and $L_i\cap L_j\neq\emptyset$ for   all $i,j\in[1,q+1]$. Let $G$ be the graph obtained from $L$ as follows. For  each $i\in [1,q+1]$, add $\Delta-2(q-1)$ disjoint copies of $T$ (called $i$-copies), where every vertex in $L_i$ is adjacent to the roots of the $i$-copies of $T$, as illustrated in \figref{EvenTreewidth}. It is easily verified that $G$ has maximum degree $\Delta$. Consider a vertex $v$ in some $i$-copy of $T$ or in $L_i$, and  a vertex $w$ in some $j$-copy of $T$ or in $L_j$. Let $x$ be in $L_i\cap L_j$. Then $\dist(v,x)\leq\frac{k}{2}$ and $\dist(w,x)\leq\frac{k}{2}$, implying $\dist(v,w)\leq k$. Hence $G$ has diameter at most $k$. Let $G'$ be the super graph of $G$ obtained by adding a clique on $V(L)$. Thus $G'$ is chordal with maximum clique size $\binom{q+1}{2}\leq t+1$. Hence $G$ has treewidth at most $t$. 
The number of vertices in $G$ is at least $(q+1)(\Delta-2q+2)|V(T)|\geq\sqrt{t+1}\cdot\frac{\Delta}{2}\cdot(\Delta-1)^{k/2-1}$. 
\end{proof}

\begin{figure}[!h]
\includegraphics{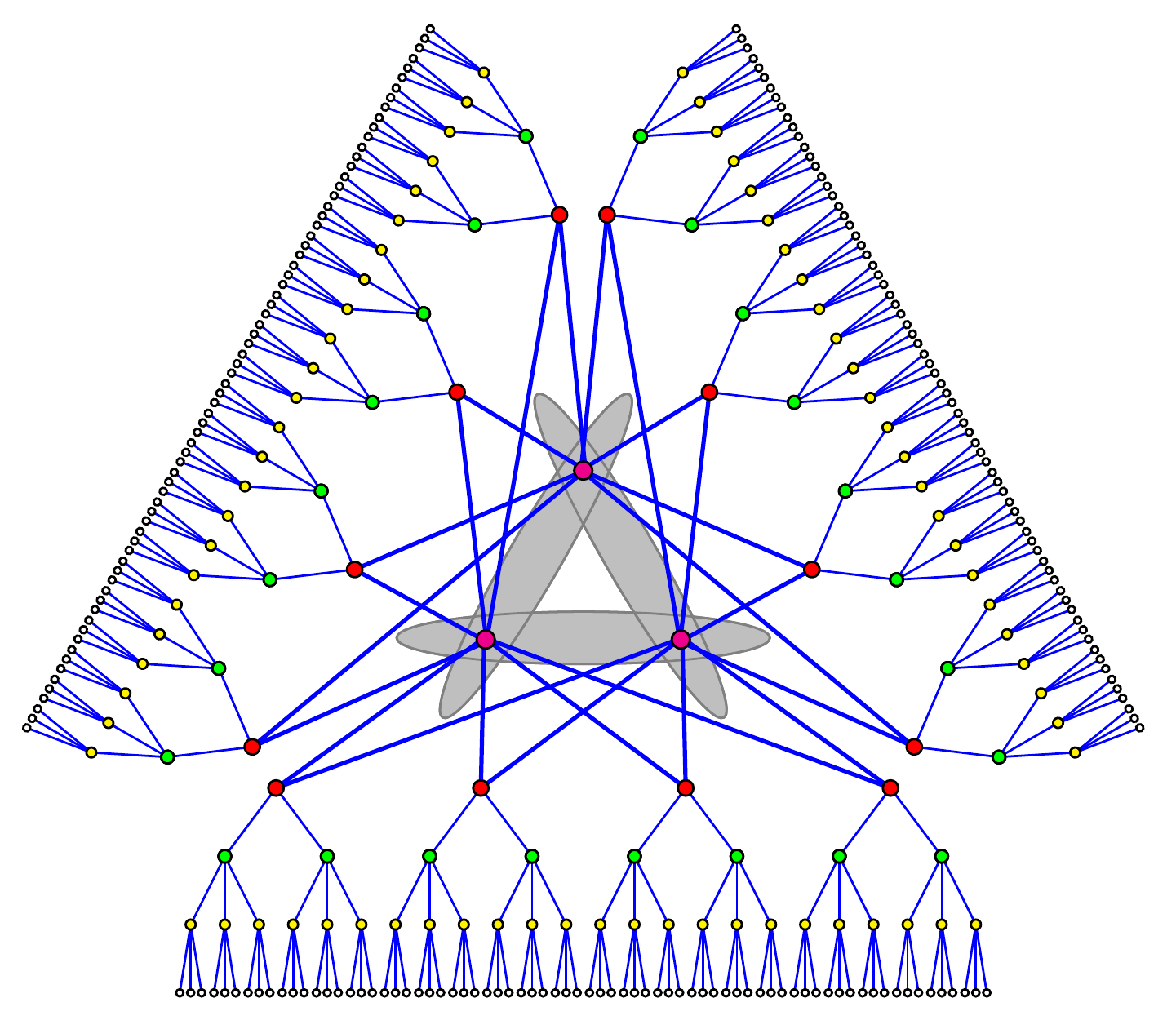}
\caption{\figlabel{EvenTreewidth}Construction in \propref{TreewidthConstruction} for even $k$. Here $\Delta=4$ and $k=8$ and $t=2$.}
\end{figure}


We now consider the degree-diameter problem for graphs with given Euler genus. Note that the case of planar graphs has been widely studied \citep{HS93,FHS95,FHS98,Tis12,Tis12a,NPW}. 
\citet{SS04} proved that for every graph $G$ with Euler genus $g$,
\begin{equation}
  \label{SS04}
|V(G)| \leq c(g+1)k\,(\Delta-1)^{\floor{k/2}}\enspace,
\end{equation}
for some absolute constant $c$. 
 \citet{Eppstein-Algo00} proved that every graph with Euler genus $g$ and diameter $k$ has treewidth at most $c(g+1)k$ for some absolute constant $c$, and Dujmovic et al.~\cite{DujMorWoo13} proved the explicit bound of $(2g+3)k$. \thmref{Treewidth} thus implies the  upper bound in  \eqref{SS04} and improves upon it when $k$ is even:

\begin{theorem}
For all $\epsilon>0$ there is a constant $c_\epsilon$ such that for
every  graph $G$ with Euler genus $g$, maximum degree $\Delta$ and
diameter $k$,
\begin{equation*}
|V(G)|\leq \begin{cases}
(3+\epsilon)((2g+3)k+1)(\Delta-1)^{(k-1)/2} & \text{ if $k$ is odd and
$\Delta\geq c_\epsilon$}\\
(\frac{3}{2}+\epsilon)\sqrt{(2g+3)k+1}\,(\Delta-1)^{k/2} & \text{ if
$k$ is even and $\Delta\geq c_{\epsilon}\sqrt{(2g+3)k+1}$}
\enspace.
\end{cases}
\end{equation*}
\end{theorem}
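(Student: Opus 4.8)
The plan is to combine two ingredients that are already in place: the treewidth bound of Dujmovi\'c et al.\ for graphs of bounded Euler genus, and \thmref{Treewidth} applied with that bound on the treewidth. First I would invoke the result (cited as \cite{Eppstein-Algo00,DujMorWoo13}) that every graph $G$ with Euler genus $g$ and diameter $k$ has treewidth at most $t := (2g+3)k$. This is the key structural input; it is a theorem of others, so no proof is required here.

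Next I would simply feed $t \le (2g+3)k$ into \thmref{Treewidth}. Since the bounds in \thmref{Treewidth} are increasing in $t$, replacing the treewidth of $G$ by the upper bound $(2g+3)k$ only weakens the inequality, so we may substitute directly. For odd $k$ this gives $|V(G)| \le (3+\epsilon)((2g+3)k+1)(\Delta-1)^{(k-1)/2}$ whenever $\Delta \ge c_\epsilon$, and for even $k$ it gives $|V(G)| \le (\tfrac32+\epsilon)\sqrt{(2g+3)k+1}\,(\Delta-1)^{k/2}$ whenever $\Delta \ge c_\epsilon\sqrt{(2g+3)k+1}$ — where the $\sqrt{t+1}$ from \thmref{Treewidth} becomes $\sqrt{(2g+3)k+1}$. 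One small point to check is the hypothesis $\Delta \ge c_\epsilon\sqrt{t+1}$ in the even case: since $t+1 = (2g+3)k+1$, the condition $\Delta \ge c_\epsilon\sqrt{(2g+3)k+1}$ is exactly what is needed, so the statement is self-consistent. The constant $c_\epsilon$ is the one supplied by \thmref{Treewidth} (equivalently by \lemref{NewSep}).

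There is essentially no obstacle: the theorem is a direct corollary, and the only thing to be careful about is that monotonicity in $t$ is being used implicitly and that the genus-to-treewidth bound one cites is the explicit $(2g+3)k$ version from \cite{DujMorWoo13} rather than the non-explicit $O((g+1)k)$ version. I would therefore write the proof in two or three sentences: state the treewidth bound, note monotonicity of the right-hand sides of \thmref{Treewidth} in $t$, and conclude by substitution. If one wanted a cleaner statement one could also remark that this reproves and sharpens the bound \eqref{SS04} of \citet{SS04} in the even-diameter case, which is the whole point of including the theorem.

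\begin{proof}
By the result of \citet{Eppstein-Algo00}, made explicit by Dujmovi\'c et al.~\cite{DujMorWoo13}, every graph with Euler genus $g$ and diameter $k$ has treewidth at most $(2g+3)k$. The upper bounds in \thmref{Treewidth} are non-decreasing functions of the treewidth parameter $t$, so applying \thmref{Treewidth} with $t$ replaced by the upper bound $(2g+3)k$ yields the claimed inequalities, with $c_\epsilon$ the constant from \thmref{Treewidth}. In particular, in the even case the hypothesis $\Delta\geq c_\epsilon\sqrt{t+1}$ becomes $\Delta\geq c_\epsilon\sqrt{(2g+3)k+1}$, as stated.
\end{proof}
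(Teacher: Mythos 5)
Your proof is correct and is essentially identical to the paper's: the paper likewise cites the explicit treewidth bound $(2g+3)k$ of Dujmovi\'c et al.\ for graphs of Euler genus $g$ and diameter $k$, and then reads the result off \thmref{Treewidth} by substitution. Nothing further is needed.
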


%


In our companion paper~\citep{NPW} we further investigate the degree-diameter problem for graphs on surfaces, providing an improved upper bound and a new lower bound.

To obtain an upper bound of the form  $n\leq f(k)\,\Delta^{\floor{k/2}}$ using the separator-based approach, one needs a separation of order  bounded by a function of the graph's diameter.  In some sense, the graphs that have a  separation of bounded order are precisely the graphs with bounded treewidth. See Reed's survey \citep{Reed97} for a precise statement here. Thus the separator-based method only works for graphs whose treewidth is  bounded by a function of their diameter. The minor-closed graph classes with this property are precisely those that exclude a fixed apex graph as a minor \citep{Eppstein-Algo00}.  Here a graph $H$ is \emph{apex} if $H-v$ is planar for some vertex $v$ of $H$. For example, $K_5$ and $K_{3,3}$ are apex.   \citet{Eppstein-Algo00} proved that for some function $f$ (depending   on $H$), the treewidth of every $H$-minor-free graph $G$ is at most   $f(\diam(G))$. This is called the \emph{diameter-treewidth} or   \emph{bounded local treewidth} property; also see    \citep{DH-SJDM04,DH-Algo04,Grohe-Comb03,DujMorWoo13}.  \citet{DH-SODA04}   strengthened Eppstein's result by showing that one can take   $f(k)=ck$ for some constant $c=c(H)$. Thus the next result follows from \thmref{Treewidth}. 
  

\begin{theorem}
  \thmlabel{ExcludeApex}
  For every fixed apex graph $H$ there is a constant $c=c(H)$, such that for every $H$-minor-free graph $G$ with diameter $k$,
\begin{equation*}
|V(G)| \leq 
\begin{cases}
ck\,(\Delta-1)^{(k-1)/2}&\text{ if $k$ is odd}\\
c\sqrt{k}\,(\Delta-1)^{k/2}&\text{ if $k$ is even and $\Delta\geq c\sqrt{k}$}\enspace.
\end{cases}
\end{equation*}
\end{theorem}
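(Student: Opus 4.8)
The plan is to derive \thmref{ExcludeApex} directly from \thmref{Treewidth} by invoking the linear diameter--treewidth bound for apex-minor-free graphs. Since $H$ is a fixed apex graph and $G$ is $H$-minor-free, the result of \citet{DH-SODA04} (strengthening \citet{Eppstein-Algo00}) gives a constant $c_0 = c_0(H)$ such that $G$ has treewidth $t \leq c_0 k$, where $k = \diam(G)$. Substituting $t \leq c_0 k$ into the bounds of \thmref{Treewidth} replaces the factor $(t+1)$ by $c_0 k + 1 \leq (c_0+1)k$ in the odd case, and $\sqrt{t+1}$ by $\sqrt{c_0 k + 1} \leq \sqrt{(c_0+1)k}$ in the even case; absorbing constants then yields a single constant $c = c(H)$ as claimed.

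The one point requiring a little care is the interaction of the hypotheses on $\Delta$. \thmref{Treewidth} requires $\Delta \geq c_\eps$ (an absolute constant, for any fixed choice of $\eps$, say $\eps = 1$) in the odd case, and $\Delta \geq c_\eps \sqrt{t+1}$ in the even case. In the even case, after the substitution $t+1 \leq (c_0+1)k$, the condition $\Delta \geq c_\eps\sqrt{(c_0+1)k}$ is implied by $\Delta \geq c\sqrt{k}$ for $c$ large enough (depending on $H$), which matches the hypothesis $\Delta \geq c\sqrt{k}$ stated in \thmref{ExcludeApex}. In the odd case, the condition $\Delta \geq c_\eps$ is vacuous once $\Delta$ is large, but the statement of \thmref{ExcludeApex} imposes no lower bound on $\Delta$ in the odd case; this is fine because for the finitely many values $\Delta < c_\eps$ (and noting we also implicitly assume $\Delta \geq 3$) the bound $|V(G)| \leq ck(\Delta-1)^{(k-1)/2}$ holds trivially after enlarging $c$ to account for these bounded-degree cases — indeed a graph of diameter $k$ and bounded maximum degree has a bounded number of vertices. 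Alternatively one simply notes that the odd-$k$ half of \thmref{Treewidth} can also be obtained from \citet[Theorem~3.2]{GPRS01} without any lower bound on $\Delta$, as remarked after \thmref{Treewidth}.

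I do not anticipate a genuine obstacle here: the proof is a two-line composition of \thmref{Treewidth} with the linear local-treewidth bound, plus bookkeeping of the constant $c(H)$ and of the degree hypotheses. The only mild subtlety is making sure the $\Delta$-thresholds are stated consistently, which is why the even case carries the hypothesis $\Delta \geq c\sqrt{k}$ and the odd case does not.

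\begin{proof}
By the result of \citet{DH-SODA04} (strengthening \citet{Eppstein-Algo00}), there is a constant $c_0=c_0(H)$ such that every $H$-minor-free graph $G$ has treewidth at most $c_0\,\diam(G)$. Apply \thmref{Treewidth} with $\epsilon:=1$, and let $c_1:=c_1(H)$ be the corresponding constant $c_\epsilon$. Suppose $G$ is $H$-minor-free with diameter $k$ and maximum degree $\Delta$, and write $t$ for the treewidth of $G$, so $t\leq c_0 k$ and hence $t+1\leq (c_0+1)k$.

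First suppose $k$ is odd. If $\Delta\geq c_1$, then \thmref{Treewidth} gives
\[
|V(G)|\;\leq\;4(t+1)(\Delta-1)^{(k-1)/2}\;\leq\;4(c_0+1)\,k\,(\Delta-1)^{(k-1)/2}\enspace.
\]
If instead $3\leq\Delta<c_1$, then $G$ has at most $M(\Delta,k)\leq M(c_1-1,k)$ vertices, which is bounded by a function of $k$ alone (indeed by $(c_1-1)^k$), so the bound $|V(G)|\leq c\,k\,(\Delta-1)^{(k-1)/2}$ holds after enlarging the constant. In either case $|V(G)|\leq c\,k\,(\Delta-1)^{(k-1)/2}$ for a suitable $c=c(H)$.

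Now suppose $k$ is even and $\Delta\geq c\sqrt{k}$ for a constant $c=c(H)$ to be chosen. Choosing $c\geq c_1\sqrt{c_0+1}$ ensures $\Delta\geq c_1\sqrt{(c_0+1)k}\geq c_1\sqrt{t+1}$, so \thmref{Treewidth} applies and gives
\[
|V(G)|\;\leq\;\tfrac{5}{2}\sqrt{t+1}\,(\Delta-1)^{k/2}\;\leq\;\tfrac{5}{2}\sqrt{(c_0+1)k}\,(\Delta-1)^{k/2}\enspace.
\]
Enlarging $c$ if necessary so that also $c\geq \tfrac{5}{2}\sqrt{c_0+1}$, we obtain $|V(G)|\leq c\sqrt{k}\,(\Delta-1)^{k/2}$. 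This completes the proof.
\end{proof}
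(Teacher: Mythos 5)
Your approach is exactly the paper's: compose the linear diameter--treewidth bound for apex-minor-free graphs of \citet{DH-SODA04} with \thmref{Treewidth}; the paper states this in one line and you fill in the bookkeeping.

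However, the step you use in the formal proof to cover the odd-$k$ case with $3\leq\Delta<c_1$ does not work. You bound $|V(G)|\leq M(c_1-1,k)$ and claim this is at most $ck(\Delta-1)^{(k-1)/2}$ after enlarging $c$. But $M(c_1-1,k)$ grows roughly like $(c_1-2)^k$ while, for $\Delta=3$, the target $ck(\Delta-1)^{(k-1)/2}=ck\,2^{(k-1)/2}$ grows like $\sqrt{2}^{\,k}$; since $c_1$ will be well above $2+\sqrt 2$ (from \lemref{NewSep} one has $c_\epsilon=6/\epsilon+2=8$ at $\epsilon=1$), the ratio blows up in $k$, so no constant $c=c(H)$ suffices. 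The remedy you yourself hint at in the preamble is the right one and should be in the proof: for small $\Delta$, skip \thmref{Treewidth} and apply \lemref{NewNewSep} directly. With $s=t+1\leq(c_0+1)k$ and $M(\Delta,\frac{k-1}2)=\frac{\Delta(\Delta-1)^{(k-1)/2}-2}{\Delta-2}\leq 3(\Delta-1)^{(k-1)/2}$ for all $\Delta\geq 3$, that lemma gives $|V(G)|\leq 9(c_0+1)k(\Delta-1)^{(k-1)/2}$ with no threshold on $\Delta$ at all (equivalently, cite the GPRS01 route). With that replacement your argument is complete and matches the paper's intended derivation.
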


As  discussed above, for minor-closed classes, \thmref{ExcludeApex} is
the strongest possible result that can be obtained using the separator-based method.

\section{3-Colourable and Triangle-Free Graphs}
\seclabel{3Colour}

As mentioned in the introduction, it is well known that the maximum number of vertices in a bipartite graph is $f(k)\,\Delta^{k-1}$. We now show that this bound does not hold for the more general class of 3-colourable graphs. In fact, we construct 3-colourable graphs where the number of vertices is within a constant factor of the Moore bound. First note that \citet{KS02} proved (building on the work of \citet{HE72}) that for large $k\gtrsim\log r$, the de Bruijn graph $B(r,k)$, which roughly has $\big(\tfrac{\Delta}{2}\big)^k$ vertices, is 3-colourable. The constructions below have the advantage of not assuming that $k$ is large. 

In what follows a \emph{pseudograph} is an undirected graph possibly with loops. A  loop at a vertex $v$ counts for 1 in the degree of $v$. A pseudograph $H$ is \emph{$k$-good} if for all (not necessarily distinct)  vertices $v$ and $w$ there is a $vw$-walk of length exactly $k$ in $H$. 

Given pseudographs $H_1$ and $H_2$, the \emph{direct product} graph $H_1\times H_2$ has vertex set $V(H_1)\times V(H_2)$, where $(v,x)(w,y)\in E(H_1\times H_2)$ if and only if $vw\in E(H_1)$ and $xy\in E(H_2)$. 

\begin{lemma}
\lemlabel{Product}
Let $H_1$ and $H_2$ be $k$-good pseudographs with maximum degree $\Delta_1$ and $\Delta_2$ respectively. Then $H_1\times H_2$ has $|V(H_1)|\cdot |V(H_2)|$ vertices, maximum degree $\Delta_1\Delta_2$, and diameter at most $k$. Moreover, if $H_2$ is loopless and $c$-colourable, then $H_1\times H_2$ is $c$-colourable.
\end{lemma}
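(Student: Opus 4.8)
The plan is to check the four claims one at a time, all directly from the definitions. The vertex count is immediate, since the direct product has vertex set $V(H_1)\times V(H_2)$ by definition. For the maximum degree, I would note that $(w,y)$ is adjacent to $(v,x)$ in $H_1\times H_2$ precisely when $vw\in E(H_1)$ and $xy\in E(H_2)$, with at most one edge between any pair; hence the neighbourhood of $(v,x)$ is the Cartesian product of the neighbourhood of $v$ in $H_1$ with that of $x$ in $H_2$. Using the stated convention that a loop contributes $1$ to a vertex's degree, the sizes of these two factors are exactly the degrees of $v$ and $x$, so the degree of $(v,x)$ is their product, which is at most $\Delta_1\Delta_2$; the only point requiring attention is keeping the loop-counting convention consistent so that the factor sizes really are the degrees in $H_1$ and $H_2$.

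For the diameter, I would exploit $k$-goodness of both factors. Given vertices $(v_1,x_1)$ and $(v_2,x_2)$, choose a $v_1v_2$-walk $v_1=a_0,a_1,\dots,a_k=v_2$ of length exactly $k$ in $H_1$ and an $x_1x_2$-walk $x_1=b_0,b_1,\dots,b_k=x_2$ of length exactly $k$ in $H_2$, then form the coordinatewise sequence $(a_0,b_0),(a_1,b_1),\dots,(a_k,b_k)$. Since $a_ia_{i+1}\in E(H_1)$ and $b_ib_{i+1}\in E(H_2)$ for every $i$, consecutive terms are adjacent in $H_1\times H_2$, so this is a walk of length $k$ from $(v_1,x_1)$ to $(v_2,x_2)$; a walk of length $k$ contains a path of length at most $k$, giving distance at most $k$ and hence diameter at most $k$.

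The colouring claim is the step I expect to need the most care. Given a proper $c$-colouring $\phi$ of $H_2$, I would colour each vertex $(v,x)$ of $H_1\times H_2$ by $\phi(x)$, i.e.\ pull $\phi$ back along the projection onto the second coordinate. For any edge $(v,x)(w,y)$ of the product we have $xy\in E(H_2)$, and this is exactly where the hypothesis that $H_2$ is loopless enters: it forces $x\neq y$ (and, as a by-product, shows $H_1\times H_2$ is itself loopless), so $\phi(x)\neq\phi(y)$ and the two endpoints receive distinct colours; thus $H_1\times H_2$ is $c$-colourable. The only real obstacle is recognising that looplessness of $H_2$ is precisely what rules out two vertices with a common second coordinate (or a single vertex) being joined by an edge coming from a loop of $H_2$, which would otherwise spoil the pulled-back colouring.
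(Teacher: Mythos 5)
Your proof is correct and follows the same approach as the paper: the coordinatewise walk built from two $k$-good walks gives the diameter bound, and pulling back a proper $c$-colouring of $H_2$ along the second-coordinate projection gives the colouring claim. You simply spell out the degree computation and the role of looplessness, which the paper leaves implicit.
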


\begin{proof}
Clearly  $H_1\times H_2$ has $|V(H_1)|\cdot |V(H_2)|$ vertices and maximum degree $\Delta_1\,\Delta_2$. 
Let $(v,x)$ and $(w,y)$ be distinct vertices of $G$. To prove that $G$ has diameter at most $k$, we  construct a $(v,x)(w,y)$-walk of length at most $k$ in $G$. Since $H_1$ is $k$-good, there is a walk $v=v_0,v_1,\dots,v_k=w$ of length  $k$ in $H_1$. Since $H_2$ is $k$-good, there is a walk $x=x_0,x_1,\dots,x_k=y$ of length  $k$ in $H_2$. Thus $(v,x)=(v_0,x_0),(v_1,x_1),\dots,(v_k,x_k)=(w,y)$ is a walk of length $k$ between $(v,x)$ and $(w,y)$ in $H_1\times H_2$. Hence $H_1\times H_2$ has diameter at most $k$. Finally, colouring each vertex $(v,x)$ of $H_1\times H_2$ by the colour assigned to $x$ in a $c$-colouring of $H_2$ gives a $c$-colouring of $H_1\times H_2$. 
\end{proof}

\begin{lemma} 
\lemlabel{K3}
$K_3$ is $k$-good for all $k\geq 2$.
\end{lemma}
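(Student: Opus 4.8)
The claim is that $K_3$ is $k$-good for all $k\geq 2$, meaning that for every pair of (not necessarily distinct) vertices $v,w$ of $K_3$ there is a $vw$-walk of length exactly $k$. The plan is a direct construction of such walks, splitting into the case $v=w$ and the case $v\neq w$, and using the fact that $K_3$ contains a triangle so that one can ``waste'' three steps by going around the triangle.

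First I would observe that $K_3$ has a closed walk of length $3$ through any vertex (traverse the triangle), and a closed walk of length $2$ through any vertex (go to a neighbour and back). Hence through any fixed vertex there are closed walks of every length $\ell\geq 2$: if $\ell$ is even, use $\ell/2$ copies of a length-$2$ back-and-forth walk; if $\ell$ is odd (and $\ell\geq 3$), use one triangle traversal of length $3$ followed by $(\ell-3)/2$ back-and-forth walks. For the case $v=w$, this already gives a closed $vw$-walk of length exactly $k$ for every $k\geq 2$.

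Next, for $v\neq w$, note that there is an edge $vw$, so a single step is a $vw$-walk of length $1$, and appending a closed walk of length $k-1$ at $w$ gives a $vw$-walk of length $k$; this works whenever $k-1\geq 2$, i.e. $k\geq 3$. For the remaining case $k=2$ with $v\neq w$, let $u$ be the third vertex of $K_3$; then $v,u,w$ is a $vw$-walk of length exactly $2$. Combining these cases establishes that $K_3$ is $k$-good for all $k\geq 2$.

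There is essentially no obstacle here; the only point requiring a moment's care is handling small values of $k$ (the parity of $k-1$ in the closed-walk construction, and the separate treatment of $k=2$), which the case analysis above dispatches. Since $K_3$ is loopless and $3$-colourable, this lemma is exactly what is needed to feed into \lemref{Product} (with $H_2=K_3$) to produce $3$-colourable graphs of small diameter.
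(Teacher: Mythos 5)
Your proof is correct and uses essentially the same ingredients as the paper's: the back-and-forth trick to lengthen a walk by $2$, and the triangle to adjust parity, followed by a short case check. The paper organizes this as an induction reducing to the base cases $k=2,3$ (with explicit walks listed), while you split on $v=w$ versus $v\neq w$ and concatenate closed walks, but the underlying argument is the same.
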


\begin{proof} 
Let $v,w\in V(K_3)=\{0,1,2\}$. If there is a $vw$-walk of length $k-2$, then there is a $vw$-walk of length $k$ (just repeat one edge twice). Thus the claim follows from the $k=2$ and $k=3$ cases. Without loss of generality, $v=0$. For $k=2$, one of $010$, $021$ and $012$ is a $vw$-walk of length 2. For $k=3$, one of $0120$, $0121$ and  $0102$  is a $vw$-walk of length 3.
\end{proof}

To obtain results for triangle-free graphs we use the following:

\begin{lemma} 
\lemlabel{C5}
$C_5$ is $k$-good for all $k\geq 4$.
\end{lemma}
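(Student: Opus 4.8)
The plan is to identify $V(C_5)$ with $\Z_5$, so that $i$ is adjacent to $i+1$ and $i-1$ modulo $5$, and to encode a walk of length $k$ by its sequence of steps $\eps_1,\dots,\eps_k\in\{+1,-1\}$. Then a $vw$-walk of length exactly $k$ is precisely a choice of such signs with $v+\sum_{j=1}^k\eps_j\equiv w\pmod 5$, and such a choice exists if and only if some integer congruent to $w-v$ modulo $5$ lies in the set $S_k:=\{-k,-k+2,\dots,k-2,k\}$ of attainable values of $\sum_j\eps_j$.

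The key point is then purely arithmetic: $S_k$ is an arithmetic progression with common difference $2$ and exactly $k+1$ terms, so for $k\geq4$ it contains at least five consecutive terms $a,a+2,a+4,a+6,a+8$, and modulo $5$ these are $a,a+2,a+4,a+1,a+3$, i.e.\ all of $\Z_5$. Hence for every $k\geq4$ and every (ordered, not necessarily distinct) pair $v,w$ there is an admissible sign vector, giving the required $vw$-walk of length exactly $k$. This proves the lemma.

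Alternatively, and in the same style as the proof of \lemref{K3}, one could reduce to the base cases $k=4$ and $k=5$: inserting a back-and-forth traversal of one edge turns a $vw$-walk of length $k-2$ into one of length $k$, so it suffices to treat $k\in\{4,5\}$; and by the vertex-transitivity of $C_5$ together with its reflection symmetry one may assume $v=0$ and $w\in\{0,1,2\}$, leaving a short finite check (for instance $0,1,2,3,2$ handles $(0,2)$ at length $4$, and $0,1,2,3,4,0$ handles $(0,0)$ at length $5$).

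There is no real obstacle; the only thing to verify carefully is that $S_k$ meets every residue class mod $5$ as soon as it has five terms, equivalently that $k\geq4$ is exactly the right threshold. (For $k=3$ one has $S_3=\{-3,-1,1,3\}$, which misses the class of $0$ mod $5$, so $C_5$ is not $3$-good — consistent with the hypothesis.)
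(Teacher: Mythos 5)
Your main argument is correct and takes a genuinely different route from the paper. The paper proves the lemma the same way it handles $K_3$: it observes that a $vw$-walk of length $k-2$ can be padded to length $k$ by doubling an edge, reduces to the base cases $k\in\{4,5\}$, assumes $v=0$ by symmetry, and then simply exhibits explicit walks $01010,\,04321,\,01212,\,04343,\,01234$ (for $k=4$) and $012340,\,040101,\,043232,\,012323,\,010404$ (for $k=5$). Your Cayley-graph argument instead encodes a length-$k$ walk as a sign vector $(\eps_1,\dots,\eps_k)\in\{\pm1\}^k$ and reduces the lemma to the arithmetic fact that the set $S_k=\{-k,-k+2,\dots,k\}$ of attainable values of $\sum_j\eps_j$ covers all residues mod $5$ once it has at least five terms, i.e.\ once $k\geq4$. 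This is cleaner and more explanatory: it gives a uniform reason for the threshold $k\geq4$ (and you correctly note $S_3$ misses the class of $0$, so $C_5$ is not $3$-good), and it generalizes immediately to show $C_n$ is $k$-good for all $k\geq n-1$. The paper's brute-force enumeration is more elementary and requires no algebraic set-up, but gives no insight into why $4$ is the right cut-off. Your alternative sketch in the second paragraph is essentially the paper's proof, modulo using the reflection symmetry of $C_5$ to shrink the finite check to $w\in\{0,1,2\}$.
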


\begin{proof} 
Say $V(C_5)=\{0,1,2,3,4\}$ and $E(C_5)=\{01,12,23,34,40\}$. Let $v,w\in V(C_5)$. If there is a $vw$-walk of length $k-2$, then there is a $vw$-walk of length $k$ (just repeat one edge twice). Thus the claim follows from the $k=4$ and $k=5$ cases. Without loss of generality, $v=0$. For $k=4$, one of $01010$, $04321$, $01212$, $04343$ and $01234$ is a $vw$-walk of length 4. For $k=5$, one of 
$012340$, $040101$, $043232$, $012323$ and $010404$ is a $vw$-walk of length 5.
\end{proof}

\twolemref{Product}{K3} imply:

\begin{lemma}
\lemlabel{ThreeColourable}
Let $H$ be a $k$-good pseudograph with maximum degree $\Delta$ for some $k\geq 2$. Then $H\times K_3$ is a 3-colourable graph with $3|V(H)|$ vertices, maximum degree $2\Delta$, and diameter at most $k$.
\end{lemma}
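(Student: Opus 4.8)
The statement is an immediate corollary of \twolemref{Product}{K3}, so the plan is essentially to verify that the quantitative parameters match up. First I would apply \lemref{K3} to conclude that $K_3$ is $k$-good, which is valid since we assume $k\geq 2$. Then I would invoke \lemref{Product} with $H_1:=H$ and $H_2:=K_3$; the hypotheses are met because $H$ is $k$-good with maximum degree $\Delta$ by assumption, and $K_3$ is $k$-good with maximum degree $\Delta_2=2$. The conclusion of \lemref{Product} directly gives that $H\times K_3$ has $|V(H)|\cdot|V(K_3)|=3|V(H)|$ vertices, maximum degree $\Delta\cdot 2=2\Delta$, and diameter at most $k$.

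For the colouring claim, I would note that $K_3$ is loopless and $3$-colourable (colour its three vertices with three distinct colours), so the "moreover" part of \lemref{Product} applies with $c=3$, yielding that $H\times K_3$ is $3$-colourable. That completes the proof. There is no real obstacle here: every needed fact has been established in the two cited lemmas, and the only thing to check is the bookkeeping of the three numerical parameters. I would keep the write-up to two or three sentences, essentially: "By \lemref{K3}, $K_3$ is $k$-good, and $K_3$ is loopless and $3$-colourable with maximum degree $2$. The result follows by applying \lemref{Product} with $H_1:=H$ and $H_2:=K_3$."

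If one wanted to be slightly more self-contained, I could spell out why $\Delta_1\Delta_2 = 2\Delta$ and $|V(H_1)||V(H_2)| = 3|V(H)|$ by substituting $|V(K_3)|=3$ and $\Delta_2=2$, but this is routine arithmetic rather than a genuine step. The main (and only) conceptual content has already been isolated into \lemref{Product}, whose proof handles the diameter bound via concatenating length-$k$ walks in the two factors, and \lemref{K3}, whose proof is the small case analysis of walks in $K_3$.
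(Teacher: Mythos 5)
Your proposal is correct and matches the paper exactly: the paper states the lemma as an immediate consequence of Lemmas~\ref{lem:Product} and~\ref{lem:K3}, with precisely the same instantiation ($H_1:=H$, $H_2:=K_3$, $\Delta_2=2$, $c=3$) and no further argument. The bookkeeping you spell out is all that is needed.
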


\begin{lemma}
\lemlabel{TriangleFree}
Let $H$ be a $k$-good pseudograph  with maximum degree $\Delta$ for some $k\geq 4$. Then $H\times C_5$ is a 3-colourable triangle-free graph with $5|V(H)|$ vertices, maximum degree $2\Delta$, and diameter at most $k$.
\end{lemma}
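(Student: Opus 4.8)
The plan is to apply \lemref{Product} directly with $H_1:=H$ and $H_2:=C_5$, in exactly the same way that \lemref{ThreeColourable} was obtained from \lemref{Product} and \lemref{K3}. First I would invoke \lemref{C5} to conclude that $C_5$ is $k$-good, which is legitimate precisely because we assume $k\geq 4$; this is the only place the hypothesis $k\geq 4$ (rather than $k\geq 2$) is needed, and it is also why we use $C_5$ rather than $K_3$.

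Next I would feed this into \lemref{Product}: since $H$ is $k$-good with maximum degree $\Delta$ and $C_5$ is $k$-good with maximum degree $2$, the product $H\times C_5$ has $|V(H)|\cdot 5 = 5|V(H)|$ vertices, maximum degree $\Delta\cdot 2 = 2\Delta$, and diameter at most $k$. For the colouring claim, note that $C_5$ is loopless and $3$-colourable (any odd cycle needs $3$ colours and no more), so the "moreover" part of \lemref{Product} gives that $H\times C_5$ is $3$-colourable.

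The only genuinely new content is the triangle-free assertion, which is not covered by \lemref{Product}. Here I would argue directly from the definition of the direct product: if $(u,a),(v,b),(w,c)$ formed a triangle in $H\times C_5$, then projecting onto the second coordinate, $ab, bc, ca$ would all be edges of $C_5$ (in particular $a,b,c$ distinct, since $C_5$ is loopless), so $a,b,c$ would span a triangle in $C_5$ — impossible, as $C_5$ has girth $5$. Hence $H\times C_5$ is triangle-free. (One should note that one cannot shortcut this via $K_3$, since $K_3$ itself contains a triangle; the point of $C_5$ is precisely that it is the smallest triangle-free graph that is still $k$-good for $k\geq 4$.)

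I do not expect any real obstacle: every ingredient is already in place, and the proof is a two-line assembly plus the short triangle-free observation. If anything is delicate it is only bookkeeping — making sure the $k\geq 4$ hypothesis is flagged as the reason \lemref{C5} rather than \lemref{K3} is used, and making sure the triangle-free claim is justified separately since \lemref{Product} says nothing about girth.
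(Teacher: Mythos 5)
Your proof is correct and matches the paper's own argument: both invoke \lemref{Product} together with \lemref{C5} for the vertex count, degree, diameter and $3$-colourability, and both prove triangle-freeness by projecting a putative triangle of $H\times C_5$ onto the second coordinate to obtain a triangle in $C_5$.
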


\begin{proof} For any graph $G$ (without loops), if $H\times G$ contains a triangle $(a,u)(b,v)(c,w)$, then $uvw$ is a triangle in $G$ (even if $H$ has loops). Since $C_5$ is triangle-free, $H\times C_5$ is triangle-free. Thus \twolemref{Product}{C5} imply the claim.
\end{proof}

For particular values of $\Delta$ and $k$, various constructions for the degree-diameter problem can be used in the following lemma to give large 3-colourable and triangle-free graphs. 

\begin{proposition}
Let $H$ be a graph with maximum degree $\Delta$ and diameter $k\geq 2$. Then there is a 3-colourable graph with $3|V(H)|$ vertices,  maximum degree $2\Delta+2$ and diameter at most $k$. Moreover, if $k\geq 4$ then there is a 3-colourable triangle-free graph with $5|V(H)|$ vertices, maximum degree $2\Delta+2$, and diameter at most $k$.
\end{proposition}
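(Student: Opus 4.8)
The plan is to deduce the Proposition from \lemref{ThreeColourable} and \lemref{TriangleFree} by first turning $H$ into a $k$-good pseudograph. The mismatch to overcome is that having diameter $k$ only guarantees, for each pair $v,w$, a $vw$-walk of length \emph{at most} $k$, whereas $k$-goodness requires a $vw$-walk of length \emph{exactly} $k$ for all (not necessarily distinct) vertices $v,w$. The fix is to add a loop at every vertex of $H$; write $H'$ for the resulting pseudograph. Since a loop contributes $1$ to the degree, $H'$ has maximum degree $\Delta+1$.

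Next I would verify that $H'$ is $k$-good. The loops provide a closed walk of length $1$ at every vertex, so from any $vw$-walk of length $\ell$ one obtains a $vw$-walk of every length $\ell'\geq\ell$ by traversing a loop at an endpoint $\ell'-\ell$ times. If $v\neq w$, there is a $vw$-walk in $H$ (hence in $H'$) of length $\ell\leq k$, and therefore one of length exactly $k$; if $v=w$, the loop at $v$ gives a closed walk of length $1$, and therefore one of length exactly $k$ (using $k\geq 2\geq 1$). Thus $H'$ is $k$-good.

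Now apply \lemref{ThreeColourable} with $H'$ in place of $H$: since $H'$ is $k$-good with maximum degree $\Delta+1$ and $k\geq 2$, the graph $H'\times K_3$ is a $3$-colourable graph with $3|V(H')|=3|V(H)|$ vertices, maximum degree $2(\Delta+1)=2\Delta+2$, and diameter at most $k$; this is the first assertion. For the second assertion, assume $k\geq 4$ and apply \lemref{TriangleFree} with $H'$: the graph $H'\times C_5$ is a $3$-colourable triangle-free graph with $5|V(H')|=5|V(H)|$ vertices, maximum degree $2(\Delta+1)=2\Delta+2$, and diameter at most $k$.

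The only step requiring any care is bridging the gap between ``diameter at most $k$'' and ``$k$-good'' in the definition used by \lemref{ThreeColourable} and \lemref{TriangleFree}, and this is exactly what adding a loop at each vertex repairs; the remainder is a direct invocation of the two lemmas. So the main (and rather mild) obstacle is simply recognising that the exactness of the walk length forces the loop trick — once that is in place there is nothing further to prove.
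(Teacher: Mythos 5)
Your proposal is correct and matches the paper's proof exactly: the paper also forms $H'$ by adding a loop at every vertex, observes that $H'$ is $k$-good with maximum degree $\Delta+1$, and then invokes Lemmas~\ref{lem:ThreeColourable} and \ref{lem:TriangleFree}. Your write-up simply spells out the verification of $k$-goodness that the paper leaves implicit.
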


\begin{proof}
Let $H'$ be the pseudograph obtained from $H$ by adding a loop at each vertex. Thus $H'$ is $k$-good and has maximum degree $\Delta+1$. \twolemref{ThreeColourable}{TriangleFree} imply that $H'\times K_3$ and $H'\times C_5$ satisfy the claims. 
\end{proof}

This result implies that for fixed $k\geq2$ and $\Delta\gg k$, the maximum number of vertices in a 3-colourable graph is within a constant factor of the unrestricted case. And the same conclusion holds for $k\geq4$ for 3-colourable triangle-free graphs. 

We now give  a concrete example:

\begin{theorem}
For all integers $\Delta\geq 4$ and $k\geq 2$, there is a 3-colourable graph with $3\floor{\frac{\Delta}{4}}^{k}$ vertices, maximum degree at most $\Delta$, and diameter at most $k$. Moreover, if $k\geq 4$ then there is a 3-colourable triangle-free graph with $5\floor{\frac{\Delta}{4}}^{k}$ vertices, maximum degree at most $\Delta$, and diameter at most $k$. 
\end{theorem}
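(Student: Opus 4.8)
The plan is to apply \lemref{ThreeColourable} and \lemref{TriangleFree} to a concrete $k$-good pseudograph with many vertices and maximum degree roughly $\Delta/2$, so that the resulting direct product has maximum degree at most $\Delta$. The natural choice is the de Bruijn pseudograph underlying $B(r,k)$ with $r:=\floor{\frac{\Delta}{4}}$: by \lemref{deBruijn} the undirected graph $B(r,k)$ has $r^k$ vertices, maximum degree at most $2r\leq\frac{\Delta}{2}$, and diameter $k$. However, \lemref{ThreeColourable} and \lemref{TriangleFree} need a $k$-\emph{good} pseudograph (a $vw$-walk of length \emph{exactly} $k$ for all, not necessarily distinct, $v,w$), not merely a graph of diameter $k$, so I would instead work directly with the de Bruijn \emph{digraph} $\BB(r,k)$ from the proof of \lemref{deBruijn}: it is $r$-inout-regular and has strong diameter $k$, hence its underlying pseudograph $H$ (keeping loops this time) has a $vw$-walk of length exactly $k$ for every ordered pair $v,w$, so $H$ is $k$-good, has $r^k$ vertices, and has maximum degree at most $2r\leq 2\floor{\frac{\Delta}{4}}\leq\frac{\Delta}{2}$.

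With this $H$ in hand, the first step is to invoke \lemref{ThreeColourable}: since $H$ is $k$-good with maximum degree at most $2r$ and $k\geq 2$, the graph $H\times K_3$ is $3$-colourable, has $3|V(H)|=3r^k=3\floor{\frac{\Delta}{4}}^k$ vertices, diameter at most $k$, and maximum degree at most $2\cdot 2r=4\floor{\frac{\Delta}{4}}\leq\Delta$. The second step is the triangle-free case: for $k\geq 4$, \lemref{TriangleFree} applied to the same $H$ gives that $H\times C_5$ is a $3$-colourable triangle-free graph with $5|V(H)|=5\floor{\frac{\Delta}{4}}^k$ vertices, diameter at most $k$, and maximum degree at most $4r\leq\Delta$. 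Both degree bounds use only the elementary inequality $4\floor{\frac{\Delta}{4}}\leq\Delta$, which holds for all $\Delta$ (and $\Delta\geq 4$ ensures $r\geq 1$ so $B(r,k)$ is nontrivial).

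There is essentially no hard part here — the theorem is a corollary obtained by feeding one explicit construction into two previously proved lemmas. The only point requiring a little care is making sure the de Bruijn object I plug in is genuinely $k$-good rather than just diameter-$k$; this is why I would refer to the digraph $\BB(r,k)$ and its strong diameter property (established in the proof of \lemref{deBruijn} via the line-digraph construction of \citet{FioYebAle84}) rather than to the undirected graph $B(r,k)$ directly. Keeping the loops of $\BB(r,k)$ when passing to the underlying pseudograph does no harm, since $K_3$ and $C_5$ are loopless and \lemref{Product} only requires the \emph{second} factor to be loopless to conclude $c$-colourability. One could alternatively cite \propref{AverageDegreeLowerBound}'s style of de Bruijn usage, but the digraph route is the cleanest. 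I would then simply state that the two displays above complete the proof.
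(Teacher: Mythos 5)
Your proof is correct and follows essentially the same route as the paper: take $r=\floor{\Delta/4}$, let $H$ be the loopful undirected pseudograph underlying the de Bruijn digraph $\BB(r,k)$, observe that strong diameter $k$ gives $k$-goodness, and feed $H$ into \lemref{ThreeColourable} and \lemref{TriangleFree}. Your explicit remark about why one must use the digraph's strong-diameter property (rather than the mere diameter-$k$ statement for $B(r,k)$) is a small but accurate clarification of a point the paper leaves implicit.
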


\begin{proof}
Let $r:=\floor{\frac{\Delta}{4}}$. Let $H$ be the undirected pseudograph underlying the de Bruijn digraph $\overrightarrow{B}(r,k)$ including any loops. \lemref{deBruijn} shows that $H$ has $r^k$ vertices, maximum degree at most $2r$, and is $k$-good. \lemref{ThreeColourable} shows that $H\times K_3$ satisfies the first claim. \lemref{TriangleFree} implies  that $H\times C_5$ satisfies the second claim.
\end{proof}

\journalarxiv{In the expanded version of this paper \citep{PW13}, we give ad-hoc constructions of triangle-free graphs with diameter 2 and 3, where the number of vertices is $\Omega(\Delta^2)$ and $\Omega(\Delta^3)$ respectively, which is within a constant factor of the Moore bound.}{We now give ad-hoc constructions of triangle-free graphs with diameter 2 and 3. These lower bounds are within a constant factor of the Moore bound. Let $\mathbb{Z}_p$ be the cyclic group with $p$ elements. For $a,b\in\mathbb{Z}_p$, let $\dist(a,b):=\min\{a-b,b-a\}$. Here, as always, addition is in the group. 

\begin{proposition}
\proplabel{TriangleFreeDiameter2}
For all $\Delta\geq 20$ there is a triangle-free graph with diameter $2$, maximum degree at most  $\Delta$, and at least $(2\floor{\frac{\Delta+4}{8}}+2)^2$ vertices.
\end{proposition}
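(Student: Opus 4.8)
The target count $\bigl(2\lfloor\frac{\Delta+4}{8}\rfloor+2\bigr)^2$ is $\Theta(\Delta^2)$, which matches the Moore-type bound $M(\Delta,2)=\Theta(\Delta^2)$ for diameter $2$ up to a constant factor, so an essentially optimal construction is required. Note that the easy constructions do not suffice here: a blow-up of $C_5$ (the vertices split into five independent sets joined cyclically) and a one-dimensional circulant $\mathrm{Cay}(\Z_n,\{x:t_1\le\dist(x,0)\le t_2\})$ both yield only $\Theta(\Delta)$ vertices. The plan is therefore to use a two-dimensional (or equivalently, mixed-radix) construction: a Cayley graph $\mathrm{Cay}(\Gamma,S)$ on an abelian group $\Gamma$ of order $(2m+2)^2$, where $m:=\lfloor\frac{\Delta+4}{8}\rfloor$ and $\Gamma$ is taken to be $\Z_{2m+2}\times\Z_{2m+2}$ (or $\Z_{(2m+2)^2}$ with a base-$(2m+2)$ layout of the connection set), for a carefully chosen symmetric set $S$ with $0\notin S$. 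The vertex count $(2m+2)^2$ is then immediate, and vertex-transitivity lets us analyse distances and triangles from the identity alone.

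First I would isolate the two combinatorial conditions on $S$ that do all the work. Triangle-freeness is equivalent to $S\cap(S+S)=\emptyset$, since a triangle through $0$ consists of $0$, $s_1$, $s_1+s_2$ with $s_1,s_2\in S$, and by symmetry $s_1+s_2=-s_3\in S$. Diameter at most $2$ is implied by $\{0\}\cup S\cup(S+S)=\Gamma$: any $v\ne 0$ either lies in $S$ (hence is adjacent to $0$) or equals $s_1+s_2$, giving the length-$2$ path $0,s_1,v$. The maximum degree is exactly $|S|$, so the construction works provided $|S|\le\Delta$. I would choose $S$ as a union of short arithmetic ``segments'' — intervals of the circular-distance type $\{x:\dist(x,0)\in[t_1,t_2]\}$ introduced just before the statement — laid out either along a bounded number of directions in $\Z_{2m+2}\times\Z_{2m+2}$ or at two scales in the cyclic model, with $|S|$ of order $8m$. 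The segments must be short and ``sum-free'' enough that their pairwise sums land outside $S$, yet spread across the scales/directions so that those pairwise sums sweep out the entire group.

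The verification then has three parts: (i) $|S|\le\Delta$, using $8m\le\Delta+4$ from the definition of $m$ — this is presumably why the hypothesis $\Delta\ge20$ is imposed, to provide enough slack in this inequality and to keep every segment inside a single period with no unwanted wrap-around; (ii) $S\cap(S+S)=\emptyset$, checked scale by scale, showing that same-scale sums fall strictly between scales while cross-scale sums fall into the gaps; and (iii) $\{0\}\cup S\cup(S+S)=\Gamma$, by writing an arbitrary element in two-coordinate (mixed-radix) form and exhibiting it as one element of $S$ or a sum of two. I expect the design of $S$ itself — steps (ii) and (iii) — to be the main obstacle: the sum-freeness requirement $S\cap(S+S)=\emptyset$ forces $S$ to be thin and highly structured, whereas the covering requirement $S+S\supseteq\Gamma\setminus(\{0\}\cup S)$ forces it to be large and well spread, and these opposing pressures must be reconciled with the constant in $|S|\le 8m$ sharp enough that the degree bound holds for every $\Delta\ge 20$, all of which is handled by the ad hoc choice of segment endpoints in terms of $\dist$.
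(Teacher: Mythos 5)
Your framework is correct and matches the paper's construction at the level of ``what kind of object we are building'': the paper's graph is exactly a Cayley graph on $\Z_p\times\Z_p$ with $p:=2\floor{\tfrac{\Delta+4}{8}}+2$, and the two reductions you state (triangle-free $\iff$ $S\cap(S+S)=\emptyset$; diameter $\leq 2$ if $\{0\}\cup S\cup(S+S)=\Gamma$) are sound. But you explicitly stop short of constructing the connection set $S$, and you even identify that step as ``the main obstacle'' to be ``handled by the ad hoc choice of segment endpoints.'' That step \emph{is} the proof; everything else is boilerplate for vertex-transitive graphs. As it stands the proposal is a correct reduction plus a promise.

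Two more concrete remarks. First, your guess about the shape of $S$ points in the wrong direction: the paper's $S$ is not a union of ``short arithmetic segments,'' but rather four nearly complete axis-parallel lines with a few points excised. Concretely, $vw\in E(G)$ iff, writing $(a,b)$ for the sorted pair $\bracket{\dist(v_1,w_1),\dist(v_2,w_2)}$, one has $a=1$ and $b\neq 2$; equivalently, $S=\{(x,y): \min(|x|,|y|)=1,\ \max(|x|,|y|)\neq 2\}$ where $|\cdot|$ denotes cyclic distance in $\Z_p$. This gives $|S|=4(p-4)\leq\Delta$. Second, the paper does not actually verify $S\cap(S+S)=\emptyset$ and covering by algebra over segments; triangle-freeness comes from a pigeonhole ``type'' argument (in any triangle two of the three edges realise $\dist(u_i,v_i)=1$ in the \emph{same} coordinate $i$, forcing the third pair to have coordinate-$i$ distance $0$ or $2$, hence a non-edge), and diameter $2$ comes from a direct case analysis on the $vw$-vector. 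So the missing piece is not only the choice of $S$ but also the combinatorial argument that makes its verification tractable. Until you exhibit such an $S$ and check the two conditions, the proposal does not prove the proposition.
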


\begin{proof}
Let $p:=2\floor{\frac{\Delta+4}{8}}+2$. Thus $p\geq 8$ is even. 
Let $G$ be a graph with vertex set $\mathbb{Z}_p^2$. Thus $|V(G)|=(2\floor{\frac{\Delta+4}{8}}+2)^2$. 
Let $(v_1,v_2)$ denote a vertex $v$ in $G$. For distinct vertices $v$ and $w$, define the \emph{$vw$-vector} to be $(a,b)$, where  $a \leq b$ and $\{a,b\} = \{ \dist(v_1,w_1), \dist(v_2,w_2) \}$. Then $vw \in E(G)$ if and only if $a=1$ and $b\neq 2$. Observe that $G$ is $4(p-3)$-regular, and  $4(p-3)\leq \Delta$.

We now show that the distance between distinct vertices $v,w$ in $G$ is at most 2. Consider the following cases for the $vw$-vector ($a,b)$, where without loss of generality, $(a,b) = ( \dist(v_1,w_1), \dist(v_2,w_2) )$:

Case $(0,\geq 1)$: Since $p\geq 8$, there exists $y\in\mathbb{Z}_p$ 
such that $\dist(v_2,y)\not\in\{0,2\}$ and $\dist(w_2,y)\not\in\{0,2\}$.  
Then $(v_1+1,y)=(w_1+1,y)$ is a common neighbour of $v$ and $w$.

Case $(1, 2)$: Since $p\geq 8$, there exists $x\in\mathbb{Z}_p$ such that $\dist(v_1,x)\not\in\{0,2\}$ and $\dist(w_1,x)\not\in\{0,2\}$.  Since $\dist(v_2,w_2)=2$ there exists $y\in\mathbb{Z}_p$ such that $\dist(v_2,y)=\dist(w_2,y)=1$. Then $(x,y)$ is a common neighbour of $v$ and $w$.

Case $(1,\neq 2)$: Then $v$ and $w$ are adjacent.

Case $(\geq 2, \geq 2)$: Since $p\geq 8$, there exists $x\in\mathbb{Z}_p$ such that $\dist(w_1,x)=1$ and $\dist(v_1,x)\not\in\{0,2\}$.  Similarly, there exists $y\in\mathbb{Z}_p$ such that $\dist(y,v_1)=1$ and $\dist(w_2,y)\not\in\{0,2\}$.  Then $(x,y)$ is a common neighbour of $v$ and $w$.

%
%

Suppose on the contrary that $G$ contains a triangle $T$. For each edge $uv$ of $T$, we have $\dist(u_i,v_i)=1$ for some $i\in[1,2]$. In this case, say $uv$ is \emph{type} $i$. Since there are three pairs of vertices in $T$ and only two types, two pairs of vertices in $T$ have the same type. Say $T=uvw$. Without loss of generality, $uv$ and $vw$ are both type-1. That is, 
$\dist(u_1,v_1)=1$ and $\dist(v_1,w_1)=1$. Thus $\dist(u_1,w_1)\in\{0,2\}$, in which case  $uw\not\in E(G)$. This contradiction shows that $G$ contains no triangle. 
\end{proof}

\begin{proposition}
\proplabel{TriangleFreeDiameter3}
For all $\Delta\geq 42$ there is a  triangle-free graph with diameter $3$, maximum degree at most $\Delta$, and at least $(2\floor{\frac{\Delta+6}{12}}+4)^3$ vertices.
\end{proposition}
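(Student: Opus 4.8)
The plan is to imitate the construction of \propref{TriangleFreeDiameter2} one dimension up. Put $p:=2\floor{\frac{\Delta+6}{12}}+4$, so $p$ is even and, since $\Delta\geq42$, $p\geq12$; these constants (the $12$, the $+4$, and the threshold $\Delta\geq42$) are chosen precisely so that the graph built below has maximum degree at most $\Delta$, exactly as $4(p-3)\leq\Delta$ came out in \propref{TriangleFreeDiameter2}. Let $G$ be the Cayley graph on $\mathbb{Z}_p^3$, so $|V(G)|=p^3=(2\floor{\frac{\Delta+6}{12}}+4)^3$, as required. As before, for a vertex $v$ write $v=(v_1,v_2,v_3)$, and for distinct $v,w$ let $(a,b,c)$ with $a\leq b\leq c$ be the sorted triple of coordinate distances $\dist(v_i,w_i)$. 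Define $vw\in E(G)$ by a rule of the same flavour as in the two-dimensional case: $a=1$ together with an avoidance condition on the remaining coordinate distances, forbidding distance $2$ (and one further class of distances), the rule being tuned so that all three requirements below hold at once.

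The degree is handled by counting the connection set $S:=\{u\in\mathbb{Z}_p^3:0u\in E(G)\}$ directly from the defining condition on the distance triple; this is a short computation showing that $G$ has maximum degree at most $6(p-5)=6p-30\leq\Delta$. For the diameter, given vertices $v$ and $w$ I would argue by cases on their distance triple $(a,b,c)$, in each case exhibiting a $vw$-walk of length at most $3$: in a coordinate where $v$ and $w$ already agree one can spend a step, and otherwise one fixes the coordinates one at a time using the generators of distance $1$, choosing intermediate vertices (common neighbours of neighbours) that lie in $S$ by the avoidance condition. This is the three-dimensional analogue of the per-case common-neighbour argument in the proof of \propref{TriangleFreeDiameter2}, with more cases to treat.

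For triangle-freeness, a triangle would produce $s_1,s_2,s_3\in S$ with $s_1+s_2+s_3=0$. Each $s_i$ has a coordinate carrying a $\pm1$; call an index with that property a \emph{type} of $s_i$. If two of the $s_i$, say $s_1$ and $s_2$, share a type $j$, then the $j$th coordinate of $s_1+s_2$, and hence of $s_3=-(s_1+s_2)$, lies in $\{0,\pm2\}$; since $p\geq12$, the index $j$ is not a type of $s_3$ (otherwise the $j$th coordinate of $s_1+s_2+s_3$ would be $\pm1$ or $\pm3$, not $0$), so $s_3$ has a non-type coordinate at distance $0$ or $2$ from $0$, contradicting the avoidance condition. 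The remaining possibility, that $s_1,s_2,s_3$ have three pairwise distinct types, cannot occur for \propref{TriangleFreeDiameter2} (two types, three vectors); ruling it out is exactly why the edge rule carries the extra avoidance class, and one shows that three distinct types make the defining conditions jointly unsatisfiable modulo $p$.

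The main obstacle is the design of the edge rule. In dimension two the choice ``$a=1$ and $b\neq2$'' succeeds because the permitted distances are numerous enough to force diameter $2$ while being structured enough to block triangles. In dimension three the connection set is proportionally far sparser: it has size $\Theta(p)$, a vanishing fraction of $\mathbb{Z}_p^3$, yet must still be an additive basis of order $3$ for $\mathbb{Z}_p^3$. So the delicate part is to pin down a single avoidance condition making $S$ simultaneously (i) small enough that $|S|\leq\Delta$, (ii) an additive basis of order $3$, and (iii) free of three elements summing to $0$, and then to push through the longer case analysis for the diameter bound. Once the rule is fixed, the rest is bookkeeping along the lines of \propref{TriangleFreeDiameter2}.
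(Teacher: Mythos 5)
The central design choice --- the edge rule --- is wrong in your sketch, and that is not a detail you can defer. You propose ``$a=1$ together with an avoidance condition \dots forbidding distance $2$ (and one further class of distances)'', transplanting the diameter-$2$ rule. But the paper's rule for diameter $3$ is $a=0$, $b=1$, $c\geq 3$: each edge keeps exactly one coordinate \emph{fixed}, moves one coordinate by exactly $1$, and moves the third by a distance $\geq 3$. This is structurally different from $a=1$, and the difference matters twice over. First, with $a=1$ and a bounded avoidance set, every coordinate of a neighbour of $v$ may take $\Theta(p)$ values, so the connection set has size $\Theta(p^2)$; you yourself observe the connection set must have size $\Theta(p)\leq\Delta$, and your stated rule cannot achieve this. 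Forcing $a=0$ is precisely what pins one coordinate and brings the degree down to $O(p)$. Second, and just as importantly, $a=0$ gives every edge a \emph{unique} ``type'', namely the single coordinate where its endpoints agree; your notion of type --- an index carrying $\pm1$ --- need not be unique once the rule permits distance triples like $(1,1,c)$, which breaks the triangle-freeness bookkeeping.

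Beyond the rule, the two genuinely hard steps are left as declarations: the diameter-$3$ case analysis (``in each case exhibiting a $vw$-walk of length at most $3$'') and the distinct-types case of triangle-freeness (``one shows \dots jointly unsatisfiable modulo $p$''). In the paper's proof the distinct-types case is the crux: after WLOG normalising which coordinates agree, one deduces that the third pair $uw$ would have distance vector $(0,\geq3,\geq3)$, which is not an edge since it lacks a coordinate at distance exactly $1$. The same-type case is handled by projecting away the agreed coordinate and invoking triangle-freeness of the diameter-$2$ graph. None of that is recoverable from your sketch without first replacing ``$a=1$'' by ``$a=0$''.
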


\begin{proof}
Let $p:=2\floor{\frac{\Delta+6}{12}}+4$. Thus $p\geq 12$ is even. Let $H$ be the graph with vertex set $\mathbb{Z}_p$, where $ab\in E(H)$ whenever $\dist(a,b)\geq 3$. 
Observe that every pair of vertices in $H$ have a common neighbour (since $p\geq 12$). 

Define a graph $G$ with vertex set $V(G) :=\mathbb{Z}_p^3$. Thus $|V(G)|=p^3$. Let $(v_1,v_2,v_3)$ denote a vertex $v$ in $G$. For distinct vertices $v$ and $w$, define the \emph{$vw$-vector} to be $(a,b,c)$, where  $a \leq b\leq c$ and $\{a,b,c\} = \{ \dist(v_1,w_1), \dist(v_2,w_2), \dist(v_3,w_3) \}$. Then $vw \in E(G)$ if and only if $a=0$ and $b=1$ and $c \geq 3$. Observe that $G$ is $6(p-5)$-regular, and  $6(p-5)\leq \Delta$.

We now show that the distance between distinct vertices $v,w$ in $G$ is at most 3. Consider the following cases for the $vw$-vector, where without loss of generality, $(a,b,c) = ( \dist(v_1,w_1), \dist(v_2,w_2), \dist(v_3,w_3) )$:

Case $(0,0,\geq 1)$: 
Let $u$ be a common neighbour of $v_3$ and $w_3$ in $H$. 
Then $(v_1+1,v_2,u)=(w_1+1,w_2,u)$ is a common neighbour of $v$ and $w$.

Case $(0,1,1)$: Then $(v_1+3,w_2,v_3)=(w_1+3,w_2,v_3)$ is a common neighbour of $v$ and $w$. 

Case $(0,1,2)$: Let $y$ be a common neighbour of $v_2$ and $w_2$ in $H$. Since $\dist(v_3,w_3)=2$, there is an element $z$ such that $\dist(v_3,z)=\dist(w_3,z)=1$. Then $(v_1,y,z)=(w_1,y,z)$ is a common neighbour of $v$ and $w$. 

Case $(0,1,\geq 3)$: Then $v$ and $w$ are adjacent.

Case $(0,\geq 2,\geq 2)$: Since $\dist(v_2, w_2)\geq 2$, there is an element $y\in\{w_2-1,w_2+1\}$ such that $\dist(v_2,y)\geq 3$. 
Similarly, $\dist(w_3,z)\geq 3$ for some  $z\in\{v_3-1,v_3+1\}$. Then $(v_1,y,z)$ is a common neighbour of $v$ and $w$. 

Case $(1,\geq 1,\geq 1)$: Since $v_2\neq w_2$, there is an element $u\in\{w_2,w_2+2,w_2-2\}$ such that $\dist(v_2,u)\geq 3$. Let $u'$ be such that $\dist(u,u')=\dist(w_2,u')=1$. Let $z$ be a common neighbour of $v_3$ and $z_3$ in $H$. Then $$(v_1,v_2,v_3)(w_1,u,v_3)(w_1,u',z)(w_1,w_2,w_3)$$ is a $vw$-path of length 3.  

Case $(\geq 2, \geq 2, \geq 2)$: Since $p\geq 6$ and $\dist(v_1,w_1)\geq2$, there exists  $x\in\mathbb{Z}_p$ such that $\dist(w_1,x)=1$ and $\dist(v_1,x)\geq 3)$. Similarly, there exists  $y,z\in\mathbb{Z}_p$ such that $\dist(w_2,y)=1$ and $\dist(v_2,y)\geq 3$, and  $\dist(v_3,z)=1$ and $\dist(w_3,z)\geq 3$.  Then $$(v_1,v_2,v_3)(x,v_2,z)(w_1,y,z)(w_1,w_2,w_3)$$ is a $vw$-path of length $3$.

Thus $G$ has diameter at most 3. 

Suppose on the contrary that $G$ contains a triangle $T$. For each edge $uv$ of $T$, we have $u_i=v_i$ for exactly one value of  $i\in[1,3]$. In this case, say $uv$ is \emph{type} $i$. First suppose that at least two of the edges in $T$ are the same type. Then all three edges in $T$ are the same type. Without loss of generality, $u_1=v_1=w_1$. Then the subgraph of $G$ induced by $\{u,v,w\}$ (ignoring the first coordinate) is a subgraph of the graph in \propref{TriangleFreeDiameter2}, which is triangle-free. Now assume that all three edges in $T$ have distinct types. Without loss of generality, $u_1=v_1$ and $u_2=w_2$ and $v_3=w_3$. Since $uv\in E(G)$, without loss of generality, $\dist(u_2,v_2)=1$ and $\dist(u_3,v_3)\geq 3$. Thus $\dist(v_2,w_2)=1$ and $\dist(u_3,w_3)\geq 3$. Since $vw\in E(G)$ and $u_1=v_1$, we have $\dist(u_1,w_1)=\dist(v_1,w_1)\geq 3$. We have shown that $\dist(u_1,w_1)\geq 3$ and $u_2=w_2$ and $\dist(u_3,w_3)\geq 3$. Thus the $uw$-vector is $(0,3,3)$, implying $uw\not\in E(G)$. This contradiction shows that $G$ is triangle-free. 
\end{proof}

Finally, note that the graphs in \twopropref{TriangleFreeDiameter2}{TriangleFreeDiameter3} have bounded chromatic number. In \propref{TriangleFreeDiameter2}, colour each vertex $v$ by $(v_1 \bmod 2, v_2 \bmod 2)$. For each edge $vw$, we have $\dist(v_i,w_i)=1$ for some $i$. Since $p$ is even, $v_i\not\equiv w_i\pmod{2}$. Thus this is a valid 4-colouring. In 
 \propref{TriangleFreeDiameter3},  colouring each vertex $v$ by $(v_1 \bmod 2, v_2 \bmod 2,v_3\bmod 2)$ gives an 8-colouring.
} 

\section*{Acknowledgement} The case $k=2$ of \thmref{Treewidth} was proved in collaboration with Bruce Reed. Thanks Bruce. 


\def\soft#1{\leavevmode\setbox0=\hbox{h}\dimen7=\ht0\advance \dimen7
  by-1ex\relax\if t#1\relax\rlap{\raise.6\dimen7
  \hbox{\kern.3ex\char'47}}#1\relax\else\if T#1\relax
  \rlap{\raise.5\dimen7\hbox{\kern1.3ex\char'47}}#1\relax \else\if
  d#1\relax\rlap{\raise.5\dimen7\hbox{\kern.9ex \char'47}}#1\relax\else\if
  D#1\relax\rlap{\raise.5\dimen7 \hbox{\kern1.4ex\char'47}}#1\relax\else\if
  l#1\relax \rlap{\raise.5\dimen7\hbox{\kern.4ex\char'47}}#1\relax \else\if
  L#1\relax\rlap{\raise.5\dimen7\hbox{\kern.7ex
  \char'47}}#1\relax\else\message{accent \string\soft \space #1 not
  defined!}#1\relax\fi\fi\fi\fi\fi\fi}

\end{document}